\documentclass[a4paper,fleqn]{cas-sc}

\usepackage[numbers]{natbib}
\def\tsc#1{\csdef{#1}{\textsc{\lowercase{#1}}\xspace}}
\tsc{WGM}
\tsc{QE}
\newtheorem{theorem}{Theorem}
\newtheorem{lemma}[theorem]{Lemma}
\newtheorem{proposition}{Proposition}

\newproof{pf}{Proof}
\newproof{pot}{Proof of Theorem \ref{thm}}
\usepackage{bm}
\usepackage{subcaption}
\usepackage{tikz}
\usepackage{graphicx}
\usepackage{float} 
\usepackage{algorithm}
\usepackage{algorithmic}

\numberwithin{equation}{section}

\newcommand{\Lrho}{\mathcal{L}_{\rho}}

\newcommand{\kap}{\kappa}
\newcommand{\wlim}{\rightharpoonup}
\newcommand{\slim}{\rightarrow}
\newcommand{\Gf}{\Gamma_f}

\newcommand{\Ltwo}{L^2}
\newcommand{\normLtwo}[1]{\left\|#1\right\|_{L^2(\Gf)}}
\newcommand{\normV}[1]{\left\|#1\right\|_{V}}
\newcommand{\innerGf}[2]{\langle #1,\, #2 \rangle_{\Gf}}

\newcommand{\Argmin}{\operatorname{Arg\,min}}

\begin{document}
	\def\floatpagepagefraction{1}
	\def\textpagefraction{.001}
	
	% Short title
	\shorttitle{A Dual ADMM Framework for Inverse  Parameter Identification in Elliptic Variational Inequalities}    
	
	% Short author
	\shortauthors{Y.Mezzan, Y. Ouakrim, A. Zafrar}  
	
	% Main title of the paper
	\title [ ]{A Dual ADMM Framework for Inverse Parameter Identification in Elliptic Variational Inequalities}  
	
	\tnotetext[1]{} 
	
	\author{Youness Mezzan$^{1}$}
	\ead{youness.mezzan@ced.uca.ma}
	\address{$1$ Department of Mathematics, Cadi Ayyad University, Faculty of Science and Technology, Marrakech, Morocco.}
	
	\author{Youssef Ouakrim$^{2}$}
	\ead{youssef.ouakrim@usmba.ac.ma}
	\address{$2$ Laboratoire de Math\'ematiques et Applications aux Sciences de l'Ing\'enieur, Ecole Normale Sup\'erieure, Universit\'e Sidi Mohamed Ben Abdellah, F\`es, Maroc.}
	
	\author{Abderrahim Zafrar$^{3,*}$}
	\ead{ zafrar.abd@gmail.com}
	\address{$3$ Department of Mathematics, Sidi Mohamed Ben Abdellah University, Faculty of Sciences Dhar El Mahraz,  Fez, Morocco.}
	
\begin{abstract}
	This paper is concerned with an inverse parameter identification problem governed by a second-kind elliptic variational inequality involving a non-differentiable boundary functional. The objective is to reconstruct an unknown Robin-type coefficient from boundary measurements while preserving the intrinsic nonsmooth structure of the underlying model. The inverse problem is formulated as a constrained optimization problem, where the cost functional measures the discrepancy between the computed and observed data and is stabilized by Tikhonov regularization. To overcome the difficulties associated with the nonsmooth variational inequality, we reformulate the state problem as a constrained minimization problem and employ an augmented Lagrangian framework combined with the alternating direction method of multipliers (ADMM). Fenchel duality is used to treat the non-differentiable boundary term without introducing smoothing approximations. We establish identifiability of the unknown coefficient and prove existence and convergence results for the proposed approximation scheme. Numerical experiments performed on a classical benchmark problem confirm the theoretical analysis and demonstrate that accurate parameter reconstruction can be achieved with reduced computational effort. The proposed methodology provides a robust framework for inverse problems arising in applications such as heat transfer, frictional contact, and corrosion detection.
\end{abstract}	
\begin{keywords}
	Inverse parameter identification; \sep Elliptic variational inequalities; \sep Robin coefficient; \sep Augmented Lagrangian method; \sep Alternating direction method of multipliers (ADMM); \sep Fenchel duality; \sep Tikhonov regularization;
\end{keywords}

\maketitle	
\section{Introduction} \label{introduction}

Inverse parameter identification problems governed by variational inequalities arise in a broad spectrum of physical and engineering applications, including frictional contact mechanics, corrosion detection from boundary voltage measurements, and Robin-type heat
exchange modelling \cite{inglese1997inverse,kaup1996method,martin1998inverse,chantasiriwan2000inverse}. In each of these settings, the parameter of interest typically a friction or Robin coefficient defined on a portion of the boundary is not directly observable, but rather must be inferred
from boundary measurements of the associated state variable. The mathematical structure of such problems is considerably more delicate than its classical PDE counterpart: the state is governed by an elliptic variational inequality of the second kind
\cite{glowinski2008lectures,hlavacek2012solution,kinderlehrer2000introduction}, which involves a convex but non-differentiable functional encoding the friction or contact condition.  This non-smooth structure, combined with the presence of inequality type constraints in the state problem \cite{duvaut1976inequations}, renders the application of standard based identification methods largely intractable.

The theoretical foundations of elliptic variational inequalities of the second kind were laid in the fundamental works of Duvaut and Lions~\cite{duvaut1976inequations} and Glowinski et~al.~\cite{glowinski2008lectures}.  While their existence and uniqueness theory is by now well-understood, the numerical identification of unknown parameters within such inequalities has remained comparatively unexplored.  The principal difficulties are threefold.  First, the map $g \mapsto u_g$ from the parameter to the state is nonlinear, Lipschitz continuous, but generally neither smooth nor convex \cite{gwinner2018two}.  Second, the unknown coefficient is defined on the contact zone, which may itself depend on the solution and is not known a priori.  Third, the uniqueness of the forward state depends on the coefficient being below a mesh-dependent critical threshold, which further complicates the numerical treatment.

Several strategies have been proposed in the literature to address related classes of inverse problems.  \cite{gwinner2018two} and \cite{gwinner2018optimization} study the identification of parameters linked to bilinear and nonlinear non-smooth functionals in second-kind variational inequalities, establishing theoretical optimality conditions under Lipschitz assumptions. \cite{migorski2019inverse} address a nonlinear quasi-variational setting in Banach spaces, using fixed-point arguments to characterize the solution set.  The identification of a distributed parameter in an elliptic variational inequality is investigated in \cite{hintermuller2001inverse}, while \cite{hintermuller2008active} develops an output least-squares formulation solved via an active-set Newton method with feasibility restoration to circumvent the low regularity of the associated Lagrange multipliers. \cite{cen2022inverse} introduces a nonsmooth regularization framework encompassing total variation regularization, thereby enabling the identification of discontinuous coefficients.  More recently, \cite{zeng2023inverse} addresses parameter identification in nonlinear mixed quasi-variational inequalities within an abstract
Banach space framework.

A widely used alternative consists in replacing the non-smooth inequality constraint by a regularized variational equation \cite{facchinei2003finite,gwinner2017optimization}, yielding a smoother optimization landscape.  Despite its numerical convenience, this smoothing approach introduces an additional regularization bias and may compromise the physical interpretation of the underlying model, particularly in applications where the stick-slip transition or the corrosion threshold has intrinsic physical significance.

The present paper proposes a fundamentally different strategy: rather than regularizing the non-smooth structure of the state problem, we preserve it exactly and exploit convex duality to decompose the resulting constrained minimization problem into a sequence of tractable subproblems.  The core of our approach rests on three ingredients.
\begin{itemize}
	\item Augmented Lagrangian decomposition.  We reformulate the non-smooth variational inequality as an equivalent constrained minimization problem by introducing an auxiliary boundary variable $\phi$ subject to the coupling constraint $u|_{\Gamma_f} = \phi$.
	The augmented Lagrangian associated with this reformulation is then minimized by the Alternating Direction Method of Multipliers (ADMM) \cite{boyd2011distributed}, which alternates between a quadratic elliptic subproblem for $u$ and a non-smooth boundary
	subproblem for $\phi$.
	
	\item Fenchel duality for the non-differentiable term.  The $\phi$-subproblem involves the minimization of the friction functional	$j(g,\cdot) = \int_{\Gamma_f} g|\cdot|\,ds$, which is convex but non-differentiable.  By applying Fenchel duality \cite{ekeland1999convex}, the solution is obtained in closed form via a pointwise soft-thresholding (proximal projection) operator, entirely avoiding any smoothing approximation of the absolute value function.
	
	\item Approximate identification sequence.  The inverse problem  is approximated by a sequence of	regularized problems, indexed by the ADMM iteration count $n$.  At level $n$, the cost functional $J^n_\varepsilon(g)$ is constructed using the approximate
	state $u^n_g$ obtained after exactly $n$ ADMM iterations, rather than the exact solution of the forward inequality.  We prove that the resulting sequence of minimizers $g^{n_k}_*$ converges strongly in $L^2(\Gamma_f)$ to the true parameter $g^*$, and that
	the corresponding states converge strongly in $V$.  This result eliminates the computational bottleneck of solving the forward variational inequality to full accuracy at every optimization step.
\end{itemize}
The proposed framework has several notable advantages over existing approaches.  Unlike smoothing-based methods, it preserves the intrinsic non-smooth structure of the friction model and avoids the introduction of any additional smoothing parameter.  Unlike semismooth Newton methods \cite{stadler2004semismooth}, it does not require the computation of generalized derivatives of the state map, which are known to be of low regularity in the bilateral constraint setting.  The ADMM splitting reduces the original non-smooth constrained identification problem to a sequence of linear elliptic solves and pointwise threshold evaluations, all of which are computationally inexpensive.  Furthermore, the same augmented Lagrangian framework extends naturally to other coefficient identification tasks, including the determination of Lamé parameters in linear elasticity, Young moduli in structural mechanics, or spatially varying Robin coefficients in heat transfer \cite{lin2005linear,chantasiriwan2000inverse,martin1998inverse}.  It also applies
directly to bilevel optimization problems with variational inequality state constraints \cite{capatina2000optimal,essoufi2021optimal}.

The remainder of the paper is organized as follows. Section~\ref{Section2} introduces the functional setting, formulates the second-kind elliptic variational inequality and its equivalent energy minimization, and casts the inverse problem as a Tikhonov-regularized output least-squares program over the admissible set. Section~\ref{Section3} establishes the identifiability of the Robin coefficient. Section~\ref{Section4} constitutes the theoretical core of the paper. We derive the augmented Lagrangian reformulation of the state problem,
obtain closed-form ADMM subproblem solutions via Fenchel duality, define the approximate inverse problem sequence $\{(IP)_\varepsilon^n\}$, and prove existence of minimizers and their
strong convergence in $L^2(\Gamma_f)$ to the exact regularized solution.
Section~\ref{Section5} provides a quantitative numerical validation. Section~\ref{Section6} draws conclusions and outlines extensions to spatially varying coefficients, noisy data, and quasi-variational settings.

%%%%%%%%%%%%%%%%%%%%%%%%%%%%%%%%%%%%%%%%%%%%%%%%%%%%%%%%%%%%%%%%%%%%%%%%%%%
\section{Forward and Inverse Problem Formulation}\label{Section2}
Let $\Omega \subset \mathbb{R}^n$, $n \in \{2, 3\}$, be an open bounded domain
with Lipschitz boundary $\Gamma$. Let $\Gamma_0 \subset \Gamma$ be a relatively
open subset, possibly empty, and set $\Gamma_f := \Gamma \setminus \Gamma_0$.
We introduce the closed subspace
\begin{equation}
	V := \left\{ v \in H^1(\Omega) : \tau v = 0
	\;\text{ a.e.\ on } \Gamma_0 \right\},
\end{equation}
where $H^1(\Omega)$ denotes the standard Sobolev space and $\tau : H^1(\Omega) \to L^2(\Gamma)$ is the Sobolev trace operator. For brevity, we write $v$ in place of $\tau v$ when the trace is evaluated on $\Gamma$.

\subsection{The Direct Problem}

We consider the following second-kind elliptic variational inequality.

\noindent\textbf{Problem $(PV)$.}
\textit{Find $u \in V$ such that}
\begin{equation}
	\label{eq:VI}
	a(u,\, v - u) + j(g,\, v) - j(g,\, u)
	\geq \langle f,\, v - u \rangle
	\qquad \forall\, v \in V,
\end{equation}
\textit{where:}
\begin{itemize}
	\item $\displaystyle a(u, v) :=
	\int_{\Omega}\bigl(\nabla u \cdot \nabla v + u\,v\bigr)
	\,\mathrm{d}x$
	is a symmetric, continuous, and coercive bilinear form on
	$V \times V$\,;
	\item $\displaystyle j(g, v) :=
	\int_{\Gamma_f} g\,|v|\,\mathrm{d}s$
	is a proper, convex, and lower semi-continuous functional,
	with $g > 0$ a.e.\ on $\Gamma_f$\,;
	\item $\displaystyle \langle f, v \rangle :=
	\int_{\Omega} f\,v\,\mathrm{d}x$
	is a bounded linear functional, with $f \in L^2(\Omega)$.
\end{itemize}
The existence and uniqueness of a solution to Problem~$(PV)$ follow from standard results in the theory of variational inequalities \cite{kinderlehrer2000introduction, glowinski2008lectures,hlavacek2012solution}. The principal analytical difficulty lies in the non-differentiability of the functional $j(g, \cdot)$ with respect to its second argument. Two classical strategies are available to address this: the introduction of a Lagrange multiplier to resolve the subdifferential of the absolute value, or the regularization of $j(g, \cdot)$ by a smooth approximation. In the present work, we pursue the former approach within a dual augmented Lagrangian framework, as detailed in Section~\ref{Section4}.

\subsection{The Inverse Problem}
The objective of this paper is to recover the Robin coefficient $g$, which characterizes the material response on the contact boundary $\Gamma_f$, from boundary measurements of the state $u$. To ensure the well-posedness of this identification problem, we impose the following condition on the solution:
\begin{equation}
	\label{eq:meas_condition}
	\mathrm{meas}\!\left(
	\bigl\{ x \in \Gamma_f : u|_{\Gamma_f}(x) = 0 \bigr\}
	\right) = 0,
\end{equation}
where $\mathrm{meas}(W)$ denotes the Lebesgue measure of a set $W$. Assumption~\eqref{eq:meas_condition} is standard in the Robin inverse problem literature \cite{martin1998inverse, chantasiriwan2000inverse,lin2005linear}. It ensures that Newton's law of convective heat transfer is meaningful on $\Gamma_f$, and is equivalent to the injectivity of the parameter-to-state map $g \mapsto u_g$, a property that is naturally
satisfied in electrochemical corrosion detection applications~\cite{inglese1997inverse}.

Let $\Gamma_s \subseteq \Gamma_f$ denote the portion of the friction boundary on which measurements are available, and let $u_d \in L^2(\Gamma_s)$ be the corresponding observed data. We define the set of admissible coefficients as
\begin{equation}
	\label{eq:admissible_set}
	F := \left\{ g \in H^1(\Gamma_f) :
	\|g\|_{L^2(\Gamma_f)} \leq g_0 \right\},
\end{equation}
for a given bound $g_0 > 0$. The inverse problem then reads: find $g \in F$ such that the solution $u$ of Problem~$(PV)$ satisfies $u|_{\Gamma_s} = u_d$.\\

The mathematical formulation of this identification problem is cast as the following output least-squares minimization.\\

\noindent\textbf{Problem $(Ip)$.}
\textit{Find $g^* \in F$ such that}
\begin{equation}
	\label{eq:OCP}
	J(g^*) = \inf_{g \in F}\, J(g),
	\qquad
	J(g) := \frac{1}{2}
	\int_{\Gamma_f} \bigl(u_g - u_d\bigr)^2 \mathrm{d}s,
\end{equation}
where $u_g := u(g) \in V$ denotes the solution of Problem~$(PV)$ associated with the coefficient $g$.\\

Since the parameter-to-state mapping $g \mapsto u_g$ is nonlinear, albeit Lipschitz continuous \cite{gwinner2018optimization}, Problem~$(Ip)$ is generally ill-posed in the sense of Hadamard. To restore stability and guarantee the existence of minimizers, we employ Tikhonov regularization and introduce the penalized cost functional
\begin{equation}
	\label{eq:Jeps}
	J_\varepsilon(g) :=
	\frac{1}{2}\int_{\Gamma_f}\bigl(u_g - u_d\bigr)^2 \mathrm{d}s
	+ \frac{\varepsilon}{2}\,\|g\|^2_{L^2(\Gamma_f)},
	\qquad \varepsilon > 0,
\end{equation}
where the second term penalizes the $L^2(\Gamma_f)$-norm of $g$ and $\varepsilon$ is the regularization parameter. The regularized inverse problem is formulated as follows.\\

\noindent\textbf{Problem $(Ip)_\varepsilon$.}
\textit{Find $g^* \in F$ such that}
\begin{equation}
	\label{eq:OCPalp}
	J_\varepsilon(g^*) = \inf_{g \in F}\, J_\varepsilon(g).
\end{equation}
The existence of at least one solution to Problem~$(Ip)_\varepsilon$ follows from the coercivity and weak lower semi-continuity of $J_\varepsilon$ on the weakly closed set $F \subset L^2(\Gamma_f)$, as established in Section~\ref{Section4}.

\section{Identifiability of the Robin Coefficient}\label{Section3}
In this section, we investigate the identifiability of the Robin coefficient $g$ from boundary measurements. We recall that the state problem \eqref{eq:VI} admits a unique solution, which is formally equivalent to the following boundary value problem:
\begin{equation}\label{p1}
	\left\{\begin{array}{ll}
		-\Delta u + u = f & \text{in } \Omega, \\
		u = 0 & \text{on } \Gamma_0, \\
		\left|\frac{\partial u}{\partial \nu}\right| \leq g, \quad u \frac{\partial u}{\partial \nu} + g|u| = 0 & \text{on } \Gamma_f,
	\end{array}\right.
\end{equation}
where $\nu$ denotes the outward unit normal vector on $\partial\Omega$. The non-smooth boundary conditions on $\Gamma_f$ can be equivalently decomposed as follows:
\begin{equation}\label{ineq}
	\begin{aligned}
		\left|\frac{\partial u}{\partial \nu}\right| < g &\Longrightarrow u = 0, \\
		\frac{\partial u}{\partial \nu} = g &\Longrightarrow u \leq 0, \\
		\frac{\partial u}{\partial \nu} = -g &\Longrightarrow u \geq 0.
	\end{aligned}
\end{equation}
The following result establishes the identifiability of the Robin coefficient from boundary measurements.
\begin{proposition}\label{identifiability}
	Let $g_1, g_2 \in F$ be two parameters, and let $u_1, u_2$ be the corresponding solutions of problem  \eqref{eq:VI} associated with $g_1$ and $g_2$, respectively.  If $u_{1} = u_{2}$ on $\Gamma_f$, then $g_1 = g_2$ a.e. on $\Gamma_f$.
\end{proposition}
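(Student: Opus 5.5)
The plan is to pass from the equality of boundary traces to an equality of solutions in $\Omega$, then to an equality of normal derivatives on $\Gamma_f$, and finally to read off $g$ from the pointwise friction law \eqref{ineq}, using the measure condition \eqref{eq:meas_condition} to exclude the stick zone.

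\textbf{Step 1: equality of the states in $\Omega$.} Set $w := u_1 - u_2 \in V$. In the strong form \eqref{p1}, $w$ satisfies $-\Delta w + w = 0$ in $\Omega$, $w = 0$ on $\Gamma_0$, and $w = 0$ on $\Gamma_f$ by hypothesis, so $w \in H^1_0(\Omega)$. To avoid relying on the formal strong form, I would argue variationally instead. Take $v = u_1 \pm \varphi$ in \eqref{eq:VI} for $g_1$, and $v = u_2 \pm \varphi$ for $g_2$, with $\varphi \in H^1_0(\Omega)$. Since $\varphi$ has zero trace, the functional $j$ does not change, and we obtain $a(u_i,\varphi) = \langle f,\varphi\rangle$ for $i = 1,2$. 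Hence $a(w,\varphi) = 0$ for all $\varphi \in H^1_0(\Omega)$. Because $w \in H^1_0(\Omega)$, we may choose $\varphi = w$, and coercivity gives $w = 0$. So $u_1 = u_2 =: u$ in $\Omega$.

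\textbf{Step 2: equality of the normal derivatives.} Since $-\Delta u + u = f \in L^2(\Omega)$, the conormal derivative $\partial_\nu u$ is well defined in $H^{-1/2}(\Gamma)$ through Green's formula, and it is the same object for both parameters. I would then write the boundary relation generated by \eqref{eq:VI} weakly. For each $i$ there is a multiplier $\lambda_i$ with $|\lambda_i| \le 1$ a.e. and $\lambda_i u = |u|$ a.e. on $\Gamma_f$, such that
\[
-\partial_\nu u = g_i \lambda_i \quad \text{on } \Gamma_f .
\]
This is the standard characterization of the subdifferential of $j(g_i,\cdot)$, obtained by testing with $v = u \pm \psi$ and using Hahn--Banach/duality for the $L^1$-type functional. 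One consequence is that $\partial_\nu u \in L^2(\Gamma_f)$.

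\textbf{Step 3: recovering $g$.} On the set $\{u \ne 0\}$ the relation $\lambda_i u = |u|$ forces $\lambda_i = \operatorname{sign}(u)$. Therefore
\[
g_1 \operatorname{sign}(u) = -\partial_\nu u = g_2 \operatorname{sign}(u) \quad \text{a.e. on } \{u\neq 0\},
\]
which gives $g_1 = g_2$ a.e. on $\{u \ne 0\}$. This matches the slip cases in \eqref{ineq}, where $\partial_\nu u = \mp g$. By \eqref{eq:meas_condition}, the complementary set $\{u = 0\}$ has measure zero, so $g_1 = g_2$ a.e. on $\Gamma_f$.

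\textbf{Main obstacle.} The delicate point is Step 2: justifying rigorously that $\partial_\nu u$ is a function and that the pointwise relation $-\partial_\nu u = g\lambda$ holds with $\lambda \in \partial|\cdot|(u)$. The plan is to show that the linear map $\psi \mapsto a(u,\psi) - \langle f,\psi\rangle$ on $V$ is bounded in absolute value by $\int_{\Gamma_f} g|\psi|\,ds$, which follows from \eqref{eq:VI}. One then extends this map by Hahn--Banach to $L^1(\Gamma_f)$ with weight $g$, which produces $\lambda \in L^\infty$. Finally, testing with $v = 0$ and $v = 2u$ gives $\int g\lambda u = \int g|u|$, and hence the complementarity $\lambda u = |u|$ a.e.
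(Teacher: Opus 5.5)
Your proof is correct, and its skeleton matches the paper's. Both arguments show $u_1=u_2$ in $\Omega$, identify the normal derivatives, subtract the two friction laws to get $|u|(g_1-g_2)=0$ a.e.\ on $\Gamma_f$, and then invoke \eqref{eq:meas_condition}. The differences are in rigor and in the last step.

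The paper works throughout with the formal strong form \eqref{p1}:
\begin{itemize}
\item It cites ``unique continuation'' to get $w\equiv 0$. What is actually needed is only the uniqueness you prove directly: $w\in H^1_0(\Omega)$ with $a(w,w)=0$.
\item It reads the pointwise law $u\,\partial_\nu u+g_i|u|=0$ off \eqref{p1} without justification.
\item It then argues by contradiction. It uses continuity of $g_1,g_2$ to find an open set $\vartheta$ of positive measure on which $g_1\neq g_2$, and then extends $u_i=0$ ``iteratively'' to all of $\Gamma_f$.
\end{itemize}

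Your Step~2 actually derives the friction law for weak solutions. You use the bound $|a(u,\psi)-\langle f,\psi\rangle|\le j(g,\psi)$, the Hahn--Banach multiplier, and the tests $v=0$ and $v=2u$ to obtain complementarity. Your Step~3 then concludes directly: $g_1=g_2$ a.e.\ on $\{u\neq 0\}$, and the complement of that set is null.

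This gains generality. You need no continuity of $g_i$, which for $g\in H^1(\Gamma_f)$ holds only when $\Gamma_f$ is one-dimensional ($n=2$). Your argument therefore also covers $n=3$ and arbitrary $L^2$ coefficients. It also avoids the iterative step, which is superfluous anyway, since $u_i=0$ on $\vartheta$ already contradicts \eqref{eq:meas_condition}.

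The price is the duality argument in Step~2. Its one implicit ingredient is that traces of functions in $V$ are dense in $L^2(\Gamma_f)$. This is what makes the representation $-\partial_\nu u=g_i\lambda_i$ determine $g_i\lambda_i$ uniquely, so that $g_1\lambda_1=g_2\lambda_2$ a.e. It is a mild standard assumption on the partition $\Gamma_0,\Gamma_f$, which the paper's strong form tacitly presupposes as well.
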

\begin{pf}
Let $g_1$ and $g_2$ be two elements of $F$ such that $u_{1\left.\right|_{\Gamma_f}}=u_{2\left.\right|_{\Gamma_f}}$. Let us define $w = u_1 - u_2$. By linearity, $w$ solves the following homogeneous boundary value problem:
\begin{equation}\label{p_w}
		\left\{\begin{aligned}
			-\Delta w+w &= 0 && \text{in } \Omega, \\
			w &= 0 && \text{on } \Gamma_0, \\
			w &= 0 && \text{on } \Gamma_f.
		\end{aligned}\right.
\end{equation}
By invoking the unique continuation property for this elliptic system, we immediately obtain that $w \equiv 0$ in $\Omega$. This implies that $u_1 = u_2$ throughout the domain $\Omega$, from which it follows that their normal derivatives also coincide on the boundary.
Consequently, evaluating the boundary conditions on $\Gamma_f$ for both states yields:
	$$
	\begin{aligned}
		u_1 \frac{\partial u_1}{\partial \nu} + g_1 |u_1| = 0 \quad \text{on } \Gamma_f, \\
		u_1 \frac{\partial u_1}{\partial \nu} + g_2 |u_1| = 0 \quad \text{on } \Gamma_f.
	\end{aligned}
	$$
Subtracting these two identities leads to:
	\begin{equation}\label{8}
		|u_1|\left(g_1-g_2\right)=0 \quad \text{on } \Gamma_f.
	\end{equation}
	Symmetrically, we also have:
	\begin{equation}\label{9}
		|u_2|\left(g_1-g_2\right)=0 \quad \text{on } \Gamma_f.
	\end{equation}
To establish the result, let us assume by contradiction that $g_1 \not \equiv g_2$. Taking advantage of the continuity of $g_1$ and $g_2$, there exists an open subset $\vartheta \subset \Gamma_f$ with strictly positive Lebesgue measure ($\text{meas}(\vartheta) > 0$) such that:
	$$
	\left(g_1-g_2\right)(x) \neq 0, \quad \forall x \in \vartheta.
	$$
In view of equations \eqref{8} and \eqref{9}, this condition implies that $u_i \equiv 0$ on $\vartheta$ for $i \in \{1,2\}$. Therefore, each $u_i$ turns out to be a solution of the modified problem:
	$$
	\left\{\begin{aligned}
		-\Delta u_i+u_i &= f && \text{in } \Omega, \\
		u_i &= 0 && \text{on } \Gamma_0, \\
		u_i &= 0 && \text{on } \vartheta, \\
		\left|\frac{\partial u_i}{\partial \nu}\right| \leq g_i, \quad u_i \frac{\partial u_i}{\partial \nu}+g_i|u_i| &= 0 && \text{on } \Gamma_f \setminus \vartheta.
	\end{aligned}\right.
	$$
Applying the same argument iteratively, we can extend this trivial trace to the whole boundary, showing that $u_i \equiv 0$ on $\Gamma_f$ for $i \in \{1,2\}$. Thus, $u_i$ satisfies both problem \eqref{p_w} and the standard Dirichlet problem:
	$$
	\left\{\begin{aligned}
		-\Delta u_i+u_i &= f && \text{in } \Omega, \\
		u_i &= 0 && \text{on } \Gamma,
	\end{aligned}\right.
	$$
where $\Gamma = \Gamma_0 \cup \Gamma_f$. This directly contradicts the assumption:
	$$
	\text{meas}\left(\{x \in \Gamma_f \;:\; u_{\left.\right|_{\Gamma_f}}(x)=0\}\right) = 0.
	$$
As a consequence:
	$$
	\left(g_1-g_2\right)(x) = 0, \quad \forall x \in \Gamma_f,
	$$
which completes the proof.
\end{pf}
\section{Augmented Lagrangian Decomposition, Algorithm, and Convergence Analysis}\label{Section4}
This section constitutes the theoretical and algorithmic core of the paper. We first reformulate the non-smooth state problem~\eqref{eq:VI} as an equivalent constrained minimization problem amenable to operator splitting. We then derive the ADMM subproblems and their closed-form solutions via Fenchel duality, define the approximate inverse problem sequence, and establish existence and strong convergence of the resulting minimizers.
\subsection{Augmented Lagrangian Formulation of the State Problem} \label{subsec:auglag}
Let $\mathcal{J} : V \to \mathbb{R}$ denote the energy functional
\begin{equation}
	\mathcal{J}(v) \;:=\; \tfrac{1}{2}\,a(v,v) \;-\; \langle f, v\rangle,
	\label{eq:energy}
\end{equation}
which is strictly convex and weakly coercive on $V$ by the assumptions on $a(\cdot,\cdot)$. It is classical (see, e.g.,~\cite{ekeland1999convex,glowinski2008lectures}) that the variational inequality~\eqref{eq:VI} is equivalent to the unconstrained minimization
\begin{equation}
	\min_{v \in V}\;\bigl[\mathcal{J}(v) + j(g, v)\bigr].
	\label{eq:min_unconstrained}
\end{equation}
In order to decouple the smooth part $\mathcal{J}$ from the non-differentiable functional $j(g,\cdot)$, we introduce an auxiliary variable $\phi \in \Ltwo(\Gf)$ representing the boundary trace of the state on $\Gf$, subject to the coupling constraint $u|_{\Gf} = \phi$. Problem~\eqref{eq:min_unconstrained} is then equivalently rewritten as the following constrained minimization problem.
\begin{equation}
	\text{Find}\;(u,\phi)\in V\times \Ltwo(\Gf)\;\text{such that}\quad
	\mathcal{J}(u)+j(g,\phi)\;\leq\;\mathcal{J}(v)+j(g,\psi)
	\quad\forall\,(v,\psi)\in V\times \Ltwo(\Gf),
	\label{eq:constrained_min}
\end{equation}
subject to the constraint $u - \phi = 0$ on $\Gf$.\\
The augmented Lagrangian associated with problem~\eqref{eq:constrained_min}
is defined, for a penalty parameter $\rho > 0$ and a Lagrange multiplier
$\lambda \in \Ltwo(\Gf)$, by
\begin{equation}
	\Lrho(u, \phi, \lambda)
	\;:=\;
	\mathcal{J}(u)
	\;+\; j(g, \phi)
	\;+\; \innerGf{\lambda}{u - \phi}
	\;+\; \frac{\rho}{2}\,\|u - \phi\|_{\Gf}^{2},
	\label{eq:aug_lag}
\end{equation}
where $\|\cdot\|_{\Gf}$ and $\innerGf{\cdot}{\cdot}$ denote the $\Ltwo(\Gf)$-norm
and inner product, respectively. The augmented Lagrangian~\eqref{eq:aug_lag}
coincides with the standard Lagrangian at feasible points, while the quadratic
penalty term $\frac{\rho}{2}\|u-\phi\|_{\Gf}^2$ enforces constraint satisfaction
progressively and improves the conditioning of the subproblems.

\subsection{ADMM Subproblems and Their Explicit Solutions} \label{subsec:subproblems}
Starting from an initial pair $(\phi^0, \lambda^0) \in \Ltwo(\Gf) \times \Ltwo(\Gf)$,
the ADMM produces a sequence $\{(u^n, \phi^n, \lambda^n)\}_{n \geq 1}$ by alternately minimizing $\Lrho$ with respect to $u$ and $\phi$, followed by a dual ascent update:
\begin{align}
	u^{n+1}   &\;\in\; \Argmin_{v \in V}\;
	\Lrho(v,\, \phi^n,\, \lambda^n),
	\label{eq:u_subproblem}\\[4pt]
	\phi^{n+1} &\;\in\; \Argmin_{\psi \in \Ltwo(\Gf)}\;
	\Lrho(u^{n+1},\, \psi,\, \lambda^n),
	\label{eq:phi_subproblem}\\[4pt]
	\lambda^{n+1} &\;=\; \lambda^n + \rho\,(u^{n+1} - \phi^{n+1}).
	\label{eq:multiplier_update}
\end{align}

Solution of the $u$-subproblem~\eqref{eq:u_subproblem}. Expanding $\Lrho(\cdot, \phi^n, \lambda^n)$ and completing the square, one verifies that~\eqref{eq:u_subproblem} is a strictly convex quadratic minimization problem over $V$. Its unique minimizer $u^{n+1} \in V$ is characterized by the variational equation
\begin{equation}
	a(u^{n+1}, v)
	\;+\; \rho\,\innerGf{u^{n+1}}{v}
	\;=\; \langle f, v\rangle
	\;+\; \innerGf{\rho\,\phi^n - \lambda^n}{v}
	\qquad \forall\, v \in V.
	\label{eq:u_variational}
\end{equation}
This is a well-posed second-order elliptic boundary value problem with a
Robin-type boundary term on $\Gf$: the bilinear form
$v \mapsto a(u,v) + \rho\innerGf{u}{v}$ is bounded and coercive on $V$, so
existence and uniqueness of $u^{n+1}$ follow directly from the
Lax--Milgram theorem.\\

Solution of the $\phi$-subproblem~\eqref{eq:phi_subproblem}. For fixed $u^{n+1}$ and $\lambda^n$, the $\phi$-subproblem reads
\begin{equation}
	\min_{\psi \in \Ltwo(\Gf)}\;
	\Bigl[
	j(g, \psi)
	+ \innerGf{\lambda^n}{ -\psi}
	+ \frac{\rho}{2}\,\|u^{n+1} - \psi\|_{\Gf}^{2}
	\Bigr].
	\label{eq:phi_min}
\end{equation}
Setting $\kap^n := \lambda^n + \rho\,u^{n+1} \in \Ltwo(\Gf)$, problem~\eqref{eq:phi_min} is rewritten (up to constants in $\psi$) as
\begin{equation}
	\min_{\psi \in \Ltwo(\Gf)}\;
	\Bigl[
	\int_{\Gf} g\,|\psi|\,\mathrm{d}s
	\;+\; \frac{\rho}{2}\,\left\|\psi - \frac{\kap^n}{\rho}\right\|_{\Gf}^{2}
	\Bigr],
	\label{eq:prox_problem}
\end{equation}
which is precisely the proximal operator of the convex, non-differentiable functional $\psi \mapsto \int_{\Gf} g|\psi|\,\mathrm{d}s$ evaluated at $\kap^n/\rho$. By Fenchel duality \cite{ekeland1999convex}, the unique minimizer is given pointwise a.e.\ on $\Gf$ by
\begin{equation}
	\phi^{n+1}(x)
	\;=\;
	\begin{cases}
		\displaystyle
		\frac{\kap^n(x)}{\rho}
		\;-\;\frac{g(x)}{\rho}\,\frac{\kap^n(x)}{|\kap^n(x)|}
		& \text{if } |\kap^n(x)| > g(x),\\[8pt]
		0
		& \text{if } |\kap^n(x)| \leq g(x).
	\end{cases}
	\label{eq:soft_thresholding}
\end{equation}
This closed-form solution is the key computational advantage of the augmented Lagrangian splitting: a non-smooth global optimization problem is reduced to a pointwise algebraic evaluation on the boundary.\\

The complete inner ADMM iteration for the state problem is summarized as follows.\\

\textit{Given $(\phi^0, \lambda^0) \in \Ltwo(\Gf)^2$ and $g \in F$, iterate for
	$n = 0, 1, 2, \ldots$:}
\begin{enumerate}
	\item Solve the linear variational equation~\eqref{eq:u_variational} to obtain
	$u^{n+1} \in V$.
	\item Compute $\kap^n = \lambda^n + \rho\,u^{n+1}$ pointwise on $\Gf$
	and set $\phi^{n+1}$ by \eqref{eq:soft_thresholding}.
	\item Update the multiplier: $\lambda^{n+1} = \lambda^n + \rho(u^{n+1} -
	\phi^{n+1})$.
\end{enumerate}

\subsection{Approximate Inverse Problem Sequence} \label{subsec:approx_IP}
For each $n \geq 1$, let $u_g^n \in V$ denote the output of exactly $n$ inner ADMM iterations applied to the state problem~\eqref{eq:u_variational} with parameter $g \in F$; that is, $u_g^n$ solves~\eqref{eq:u_variational} at the $n$-th step of the inner loop. We define the approximate cost functional
\begin{equation}
	J_\varepsilon^n(g)
	\;:=\;
	\frac{1}{2}\int_{\Gf}\bigl(u_g^n - u_d\bigr)^2\,\mathrm{d}s
	\;+\; \frac{\varepsilon}{2}\,\normLtwo{g}^2,
	\qquad g \in F,
	\label{eq:approx_cost}
\end{equation}
and the associated approximate inverse problem:

\noindent\textbf{Problem $(IP)_\varepsilon^n$.}
\emph{Find $g_n^* \in F$ such that}
\begin{equation}
	J_\varepsilon^n(g_n^*)
	\;=\;
	\inf_{g \in F}\; J_\varepsilon^n(g).
	\label{eq:approx_IP}
\end{equation}
The sequence $\{(IP)_\varepsilon^n\}_{n \geq 1}$ approximates the regularized
inverse problem $(IP)_\varepsilon$ by replacing the exact forward solve with an
$n$-step ADMM approximation. As $n \to \infty$, the approximate state $u_g^n$
converges strongly to the exact state $u_g$ in $V$ (by the convergence theory of
ADMM), and accordingly $J_\varepsilon^n \to J_\varepsilon$ pointwise on $F$.
The central theoretical contributions of the present work are to establish that each
approximate problem $(IP)_\varepsilon^n$ admits at least one solution, and that the
sequence of approximate minimizers converges strongly to the exact minimizer of
$(IP)_\varepsilon$ as $n \to \infty$.

\subsection{Existence of Solutions to the Approximate Problems} \label{subsec:existence}
\begin{proposition}
	\label{prop:existence_approx}
	For every $n \geq 1$ and every $\varepsilon > 0$, the minimization
	problem $(IP)_\varepsilon^n$ defined by~\eqref{eq:approx_IP} admits at least one
	solution $g_n^* \in F$.
\end{proposition}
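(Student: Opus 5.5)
We will use the direct method of the calculus of variations on the admissible set $F$, with the weak topology of $\Ltwo(\Gf)$ together with a compactness argument. The functional $J_\varepsilon^n$ is bounded below by $0$, so $m_n := \inf_{g\in F} J_\varepsilon^n(g)$ is finite. Take a minimizing sequence $\{g_k\}\subset F$ with $J_\varepsilon^n(g_k)\to m_n$. Since $\normLtwo{g_k}\le g_0$, we can extract a subsequence, not relabeled, with $g_k \wlim \bar g$ weakly in $\Ltwo(\Gf)$. The set $F$ is convex and closed in $\Ltwo(\Gf)$, hence weakly closed, so $\bar g\in F$. If one insists on the $H^1(\Gf)$ membership in $F$, one either uses that $F$ as stated only bounds the $L^2$ norm, or restricts to an $H^1$-bounded minimizing sequence and uses the compact embedding $H^1(\Gf)\hookrightarrow \Ltwo(\Gf)$ to get strong convergence.

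The core step is continuity of the map $g\mapsto u_g^n$ for fixed $n$. We will show this by induction on the finite ADMM recursion, starting from the same fixed $(\phi^0,\lambda^0)$ for every $g$.

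\emph{Base case.} The $u$-step \eqref{eq:u_variational} does not involve $g$. Hence $u^1_g$ is independent of $g$.

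\emph{Inductive step.} The only $g$-dependence enters through the soft-thresholding \eqref{eq:soft_thresholding}. The map
\[
(\kappa,g)\mapsto \rho^{-1}\,\mathrm{sign}(\kappa)\,(|\kappa|-g)_+
\]
is pointwise $1/\rho$-Lipschitz in both arguments. Therefore $\|\phi^{m+1}_{g_1}-\phi^{m+1}_{g_2}\|_{\Gf}\le \rho^{-1}\bigl(\|\kap^m_{g_1}-\kap^m_{g_2}\|_{\Gf}+\|g_1-g_2\|_{\Gf}\bigr)$. The Lax--Milgram solution operator of \eqref{eq:u_variational} is bounded from $\Ltwo(\Gf)$ data to $V$, and the trace $V\to\Ltwo(\Gf)$ is continuous. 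Combining these, $(\phi^m,\lambda^m,u^m)$ depend Lipschitz-continuously on $g$ in $\Ltwo(\Gf)\times\Ltwo(\Gf)\times V$, with a constant $C_n$ depending only on $n$, $\rho$ and the coercivity and trace constants.

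This Lipschitz bound only gives continuity under \emph{strong} $L^2$ convergence of $g_k$. The main obstacle is therefore upgrading weak convergence $g_k\wlim \bar g$ so that $u^n_{g_k}\to u^n_{\bar g}$. Soft-thresholding is nonlinear, so it is not weakly continuous in $g$ in general.

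The first route is compactness: if $g_k$ is bounded in $H^1(\Gf)$, for instance by working with $H^1$-bounded minimizing sequences, or by noting that the intended admissible class controls the $H^1$ norm, then $g_k\to\bar g$ strongly in $\Ltwo(\Gf)$ and the Lipschitz estimate finishes. As a fallback for the boundary traces, we will use that $u^n_{g}$ lies in a bounded set of $V$ uniformly in $g\in F$, which follows from the recursion with constants depending on $g_0$. The compact trace embedding $V\hookrightarrow \Ltwo(\Gf)$ then gives strong trace convergence of a subsequence of $u^n_{g_k}$. We would still need to identify the limit as $u^n_{\bar g}$, which again relies on the strong convergence of $g_k$. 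So we will commit to the compactness route.

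Once $u^n_{g_k}|_{\Gf}\to u^n_{\bar g}|_{\Gf}$ strongly in $\Ltwo(\Gf)$, the data term converges. The term $\frac{\varepsilon}{2}\normLtwo{g}^2$ is weakly lower semicontinuous. Hence
\[
J_\varepsilon^n(\bar g)\le \liminf_k J_\varepsilon^n(g_k)=m_n,
\]
so $g_n^*:=\bar g\in F$ is a minimizer, which proves Proposition~\ref{prop:existence_approx}.
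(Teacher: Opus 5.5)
\textbf{There is a genuine gap.} You diagnose the difficulty correctly, but your argument does not overcome it for the admissible set actually defined in \eqref{eq:admissible_set}.

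The route you commit to needs the minimizing sequence $(g_k)$ to be bounded in $H^1(\Gf)$. But $F=\{g\in H^1(\Gf):\normLtwo{g}\le g_0\}$ controls only the $L^2$ norm. Rapidly oscillating smooth functions with fixed $L^2$ norm lie in $F$ while their $H^1$ norms blow up. You also cannot ``restrict to an $H^1$-bounded minimizing sequence'' without first proving that one exists.

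For the same reason, your claim that $F$ is closed in $\Ltwo(\Gf)$ is false. $H^1(\Gf)$ is dense in $\Ltwo(\Gf)$, so the $L^2$-closure of $F$ is the whole ball $B=\{g\in\Ltwo(\Gf):\normLtwo{g}\le g_0\}$, and the weak limit $\bar g$ need not lie in $F$.

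This is not merely presentational. Your Lipschitz estimate makes $J_\varepsilon^n$ continuous for the strong $L^2$ topology. (Incidentally, that estimate also needs $g\ge 0$, since for $g<0$ the thresholding jumps at $\kappa=0$.) By density, $\inf_F J_\varepsilon^n=\inf_B J_\varepsilon^n$. Hence a minimizer in $F$ exists only if the problem on $B$ has a minimizer that happens to lie in $H^1(\Gf)$, and nothing in the setting guarantees that.

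What you have written is a complete proof for an $H^1$-bounded admissible set, e.g.\ $\{g\in H^1(\Gf):\|g\|_{H^1(\Gf)}\le g_0,\ g\ge 0\}$. That set is convex, closed and bounded in $H^1(\Gf)$, hence weakly compact there. The compact embedding then gives $g_k\slim\bar g$ in $\Ltwo(\Gf)$, and your inductive Lipschitz bound yields $u^n_{g_k}\slim u^n_{\bar g}$ in $V$.

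For comparison, the paper's proof uses the same skeleton: weak $L^2$ compactness of $F$, $k$-uniform bounds on $u^n_{g_k}$ in $V$ by induction over the ADMM steps via the Lax--Milgram estimate, the compact trace, and weak lower semicontinuity of the Tikhonov term. It also asserts that $F$ is $L^2$-closed. At exactly the point you flag, it simply states that passing to the limit in \eqref{eq:u_variational} with $g_k\wlim g$ identifies the weak limit of $u^n_{g_k}$ as $u^n_g$. Your remark that the soft-thresholding \eqref{eq:soft_thresholding} does not commute with weak convergence shows that this identification is unjustified as written for $n\ge 2$. For $n=1$ the state is independent of $g$, so that case is trivial. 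Your write-up locates the real difficulty more accurately than the paper does, but it does not supply the missing ingredient: it moves it into an $H^1$ bound that the statement does not provide.
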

\begin{pf}
Fix $n \geq 1$ and $\varepsilon > 0$. We verify the hypotheses of the direct method in the calculus of variations: coercivity, weak lower semi-continuity, and sequential weak compactness of $F$.\\
Coercivity and weak compactness of $F$. By definition~\eqref{eq:admissible_set}, the admissible set $F$ is a bounded, closed, and convex subset of $L^2(\Gf)$, hence weakly sequentially compact therein. Any minimizing sequence $(g_k)_k \subset F$ therefore admits a subsequence such that $g_k \wlim g$ weakly in $\Ltwo(\Gf)$ for some $g \in F$.\\	
Boundedness of the approximate states. For each $k$, the approximate state $u_{g_k}^n$ is the output of the $n$-th step of the inner ADMM loop applied with parameter $g_k$. In particular, $u_{g_k}^1 \in V$ solves~\eqref{eq:u_variational} with right-hand side data $f \in \Ltwo(\Omega)$ and $\rho\phi^0 - \lambda^0 \in \Ltwo(\Gf)$. Testing with $v = u_{g_k}^1$ and using coercivity of $a(\cdot,\cdot)$ gives
\begin{equation}
		\alpha\,\normV{u_{g_k}^1}^2
		\;\leq\;
		a(u_{g_k}^1,\, u_{g_k}^1)
		\;\leq\;
		\|f\|_{\Ltwo(\Omega)}\,\normV{u_{g_k}^1}
		\;+\; \|\rho\phi^0 - \lambda^0\|_{\Ltwo(\Gf)}\,\normV{u_{g_k}^1},
		\label{eq:coercivity_bound}
\end{equation}
where $\alpha > 0$ is the coercivity constant of $a$. Dividing by $\normV{u_{g_k}^1}$ (assumed positive; the case $u_{g_k}^1 = 0$ is trivial) shows that $(u_{g_k}^1)_k$ is bounded in $V$ uniformly in $k$. An induction argument on the ADMM steps, using the fact that each subsequent state $u_{g_k}^{n+1}$ solves~\eqref{eq:u_variational} with right-hand side data updated by the bounded sequence $(\rho\phi_{g_k}^n - \lambda_{g_k}^n)_k$, establishes by the same coercivity estimate that $(u_{g_k}^n)_k$ is bounded in $V$ for every fixed $n$.\\
Weak convergence of the approximate states. Since $(u_{g_k}^n)_k$ is bounded in $V$ and $V$ is a Hilbert space, we may extract a further subsequence such that $u_{g_k}^n \wlim u^n$ weakly in $V$ for some $u^n \in V$. The compact embedding $V \hookrightarrow \Ltwo(\Gf)$  then yields $u_{g_k}^n \slim u^n$ strongly in $\Ltwo(\Gf)$. Passing to the limit in~\eqref{eq:u_variational} with data $g_k \wlim g$, one identifies $u^n = u_g^n$.\\
Weak lower semi-continuity of $J_\varepsilon^n$. Along the subsequence extracted above, the strong convergence $u_{g_k}^n \slim u_g^n$ in $\Ltwo(\Gf)$ implies
\begin{equation}
		\int_{\Gf}(u_{g_k}^n - u_d)^2\,\mathrm{d}s
		\;\xrightarrow{k \to \infty}\;
		\int_{\Gf}(u_g^n - u_d)^2\,\mathrm{d}s.
		\label{eq:data_convergence}
\end{equation}
The Tikhonov term $\frac{\varepsilon}{2}\normLtwo{g_k}^2$ is weakly lower semi-continuous in $\Ltwo(\Gf)$ by the convexity and continuity of the norm. Combining these two observations gives
\begin{equation}
		J_\varepsilon^n(g)
		\;\leq\;
		\liminf_{k \to \infty}\; J_\varepsilon^n(g_k),
		\label{eq:wlsc}
\end{equation}
which establishes the weak lower semi-continuity of $J_\varepsilon^n$ on $F$. Since $F$ is weakly sequentially compact and $J_\varepsilon^n$ is weakly lower semi-continuous, the infimum of $J_\varepsilon^n$ over $F$ is attained, and $g := g_n^* \in F$ is a solution of $(IP)_\varepsilon^n$.
\end{pf}

\subsection{Strong Convergence of the Approximate Minimizers} \label{subsec:convergence}
We are now in a position to prove the convergence result, which establishes that the sequence of approximate minimizers $g_n^*$ converges strongly to the exact regularized minimizer $g^*$ of $(IP)_\varepsilon$, and that the corresponding states converge strongly in $V$.
\begin{theorem}\label{thm:convergence}
	Let $\varepsilon > 0$ be fixed. For each $n \geq 1$, let $g_n^* \in F$ be any
	solution of the approximate problem $(IP)_\varepsilon^n$, and let
	$u_n^* := u_{g_n^*}^n \in V$ denote the corresponding approximate state.
	Let $g^* \in F$ be any solution of the regularized inverse problem
	$(IP)_\varepsilon$ with exact state $u^* := u_{g^*} \in V$.
	Then, as $n \to +\infty$,
	\begin{equation}
		g_n^* \;\slim\; g^*
		\quad \text{strongly in } \Ltwo(\Gf),
		\qquad
		u_n^* \;\slim\; u^*
		\quad \text{strongly in } V.
		\label{eq:strong_convergence}
	\end{equation}
	Moreover,
	\begin{equation}
		\lim_{n \to +\infty} J_\varepsilon^n(g_n^*)
		\;=\;
		J_\varepsilon(g^*)
		\;=\;
		\inf_{g \in F}\; J_\varepsilon(g).
		\label{eq:functional_convergence}
	\end{equation}
\end{theorem}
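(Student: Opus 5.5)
We follow the standard $\Gamma$-convergence-type argument for approximate optimal control problems, combining the existence result of Proposition~\ref{prop:existence_approx} with the convergence $u_g^n \slim u_g$ in $V$ asserted in Subsection~\ref{subsec:approx_IP}. First, since $g_n^* \in F$ and $F$ is bounded, closed and convex in $\Ltwo(\Gf)$, we extract a subsequence (still indexed by $n$, or by $n_k$) with $g_n^* \wlim \bar g$ weakly in $\Ltwo(\Gf)$ for some $\bar g \in F$. Next we show $u_n^* = u^n_{g_n^*} \slim u_{\bar g}$ strongly in $V$. To do so we split
\[
u^n_{g_n^*} - u_{\bar g} = \bigl(u^n_{g_n^*} - u_{g_n^*}\bigr) + \bigl(u_{g_n^*} - u_{\bar g}\bigr).
\]
The first term requires that the ADMM convergence $u^n_g\slim u_g$ hold uniformly for $g\in F$. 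We would obtain this from the standard ADMM Lyapunov estimate: the quantity $\rho\|\phi^n-\phi_g\|^2_{\Gf}+\rho^{-1}\|\lambda^n-\lambda_g\|^2_{\Gf}$ is nonincreasing, and the residual sum is bounded by its initial value. That initial value is bounded uniformly over $F$, because the exact multiplier satisfies $|\lambda_g|\le g$ and the states $u_g$ are bounded by the coercivity estimate \eqref{eq:coercivity_bound}. For the second term we use the continuity of $g\mapsto u_g$ under weak convergence of $g$. Testing \eqref{eq:VI} for $g_n^*$ with $v=u_{\bar g}$, and for $\bar g$ with $v=u_{g_n^*}$, and adding the two inequalities gives
\[
\alpha\|u_{g_n^*}-u_{\bar g}\|_V^2\le \int_{\Gf}(g_n^*-\bar g)\bigl(|u_{\bar g}|-|u_{g_n^*}|\bigr)\,\mathrm{d}s .
\]
The right-hand side tends to zero: $|u_{g_n^*}|$ converges strongly in $\Ltwo(\Gf)$ by the compact trace embedding, since $(u_{g_n^*})$ is bounded in $V$, and $g_n^*-\bar g\wlim 0$.

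Then we prove that $\bar g$ minimizes $J_\varepsilon$. For any $g\in F$, minimality of $g_n^*$ gives $J^n_\varepsilon(g_n^*)\le J^n_\varepsilon(g)$. The right-hand side tends to $J_\varepsilon(g)$ by pointwise convergence of $J^n_\varepsilon$. On the left, the data term converges because of the strong convergence of traces, and the Tikhonov term is weakly lower semicontinuous, so
\[
J_\varepsilon(\bar g)\le\liminf_n J^n_\varepsilon(g_n^*)\le\limsup_n J^n_\varepsilon(g_n^*)\le J_\varepsilon(g)\quad\forall g\in F .
\]
Hence $\bar g$ solves $(IP)_\varepsilon$, and choosing $g=\bar g$ yields $\lim_n J^n_\varepsilon(g_n^*)=J_\varepsilon(\bar g)=\inf_F J_\varepsilon$, which is \eqref{eq:functional_convergence}. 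Subtracting the convergent data terms from this limit gives $\|g_n^*\|_{\Ltwo(\Gf)}\to\|\bar g\|_{\Ltwo(\Gf)}$. Together with weak convergence, the Radon--Riesz property of the Hilbert space $\Ltwo(\Gf)$ upgrades $g_n^*\wlim \bar g$ to strong convergence.

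It remains to identify $\bar g$ with $g^*$ and to pass from subsequences to the whole sequence. For this we need the minimizer of $(IP)_\varepsilon$ to be unique. We would argue uniqueness from the injectivity of $g\mapsto u_g$ given by Proposition~\ref{identifiability} under \eqref{eq:meas_condition}; if this falls short, the statement should be read as convergence of subsequences to a minimizer. Granting uniqueness, every subsequence has a further subsequence converging to $g^*$, so the whole sequence converges, and \eqref{eq:strong_convergence} follows. I expect the main obstacle to be this uniqueness step, together with the uniform-in-$g$ ADMM convergence used for the first term of the splitting, since the paper only states pointwise convergence in $g$.
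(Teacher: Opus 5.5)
Your skeleton is the paper's: weak subsequence, convergence of the states, a $\liminf$/$\limsup$ sandwich, then norm convergence plus weak convergence. At the state step you are more careful than the paper, which just invokes fixed-$g$ ADMM convergence along the moving parameters $g_n^*$. Your splitting, and your monotonicity estimate giving weak-to-strong continuity of $g\mapsto u_g$, are correct.

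\textbf{The identification step.} The genuine gap is $\bar g=g^*$, because the injectivity route you propose does not work. Proposition~\ref{identifiability} gives injectivity of $g\mapsto u_g$, and injectivity says nothing about uniqueness of minimizers of $J_\varepsilon$. Since the state map is nonlinear (not even differentiable), $J_\varepsilon$ is in general nonconvex. Two parameters $g_1\neq g_2$ with $u_{g_1}\neq u_{g_2}$ and $J_\varepsilon(g_1)=J_\varepsilon(g_2)=\inf_F J_\varepsilon$ are perfectly compatible with injectivity. Uniqueness would need something like convexity of $g\mapsto\frac12\normLtwo{u_g-u_d}^2$, or a condition making $\varepsilon$ dominate its nonconvexity, and nothing of that kind is available.

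Because the theorem is asserted for \emph{any} solution $g^*$, it implicitly claims that $(IP)_\varepsilon$ has a unique solution, which the hypotheses do not provide. The paper's own proof has exactly this hole: ``we may therefore identify $g^*:=\tilde g$'' is a relabelling, not an argument. What your argument, like the paper's, actually proves is the following.
\begin{itemize}
\item \eqref{eq:functional_convergence} holds for the whole sequence.
\item Every subsequence of $(g_n^*)$ has a further subsequence converging strongly in $\Ltwo(\Gf)$ to some minimizer $\bar g$ of $(IP)_\varepsilon$, with $u_n^*\slim u_{\bar g}$ in $V$ along it.
\item The whole sequence converges to $g^*$ only under the additional hypothesis that the minimizer is unique.
\end{itemize}
Your fallback reading is the correct one: drop the injectivity argument and state this version.

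\textbf{The uniform ADMM step.} This is also incomplete as sketched. A nonincreasing Lyapunov function $V^n(g)$ and a uniformly bounded residual sum only control the smallest residual among the first $n$ iterations, not the $n$-th one at the parameter $g_n^*$. You also need that $a_k:=\rho\|\phi^{k+1}-\phi^k\|_{\Gf}^2+\rho^{-1}\|\lambda^{k+1}-\lambda^k\|_{\Gf}^2$ is nonincreasing in $k$. This is the standard non-ergodic ADMM estimate, equivalently firm nonexpansiveness of the Douglas--Rachford operator on the dual, and it gives $(n+1)\,a_n\le V^0(g)$.

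You then have to convert residuals into a state error. Subtract the optimality conditions of the $u$- and $\phi$-steps from those of the exact saddle point $(u_g,\phi_g,\lambda_g)$, test with $u^{n+1}-u_g$, and use monotonicity of $\partial j(g,\cdot)$. This gives
\[
\alpha\normV{u^{n+1}-u_g}^2\le\|\lambda^{n+1}-\lambda_g\|_{\Gf}\,\|u^{n+1}-\phi^{n+1}\|_{\Gf}+\rho\,c_\tau\,\|\phi^{n+1}-\phi^n\|_{\Gf}\,\normV{u^{n+1}-u_g},
\]
where $c_\tau$ is the trace constant and $\|\lambda^{n+1}-\lambda_g\|_{\Gf}^2\le\rho\,V^0(g)$. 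Hence $\sup_{g\in F}\normV{u_g^n-u_g}\slim 0$, provided $\sup_{g\in F}V^0(g)<\infty$.

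\textbf{Defects inherited from the paper's setup.} That last bound uses $g\ge 0$, both for $|\lambda_g|\le g$ and for the a priori bound on $u_g$, and $F$ does not impose positivity. Also, $F$ is not closed in $\Ltwo(\Gf)$: it is dense in the ball $\{\normLtwo{g}\le g_0\}$, so $\bar g\in F$ is not guaranteed. Both should be fixed by redefining $F$, e.g.\ as a closed convex set of nonnegative functions in $\Ltwo(\Gf)$, or by adding an $H^1(\Gf)$ bound.
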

\begin{pf}
Since $F \subset \Ltwo(\Gf)$ is bounded and the sequence $(u_n^*)_n$ is bounded in $V$ (by the same coercivity argument as in the proof of Proposition~\ref{prop:existence_approx}), we may extract a subsequence such that
\begin{equation}
		g_n^* \;\wlim\; \tilde{g}
		\quad\text{weakly in } \Ltwo(\Gf),
		\qquad
		u_n^* \;\wlim\; \tilde{u}
		\quad\text{weakly in } V,
		\label{eq:weak_limits}
\end{equation}
as $n \to +\infty$, for some $\tilde{g} \in F$ and $\tilde{u} \in V$.\\
Since $u_n^*$ solves~\eqref{eq:u_variational} at the $n$-th ADMM step with parameter $g_n^*$, and the ADMM iterates converge strongly to the exact solution as $n \to \infty$ (cf.~\cite{boyd2011distributed,eckstein2015understanding}), we have $u_n^* \slim u_{\tilde{g}}$ strongly in $V$ along the subsequence, where $u_{\tilde{g}} \in V$ is the unique solution of the forward variational inequality~\eqref{eq:VI} with parameter $\tilde{g}$.  Hence $\tilde{u} = u_{\tilde{g}}$ and $u_n^* \slim \tilde{u}$ strongly in $V$.\\
The strong convergence $u_n^* \slim \tilde{u}$ in $V$ implies, via the compact trace embedding $V \hookrightarrow \Ltwo(\Gf)$, that $u_n^*|_{\Gf} \slim \tilde{u}|_{\Gf}$ strongly in $\Ltwo(\Gf)$. Therefore,
\begin{equation}
		J_\varepsilon(\tilde{g})
		\;=\;
		\frac{1}{2}\,\normLtwo{\tilde{u} - u_d}^2
		\;+\; \frac{\varepsilon}{2}\,\normLtwo{\tilde{g}}^2
		\;\leq\;
		\liminf_{n\to\infty} J_\varepsilon^n(g_n^*),
		\label{eq:liminf}
\end{equation}
where the inequality uses the weak lower semi-continuity of the Tikhonov term and the pointwise convergence $J_\varepsilon^n \to J_\varepsilon$ on $F$. On the other hand, since $g_n^*$ is optimal for $(IP)_\varepsilon^n$, we have $J_\varepsilon^n(g_n^*) \leq J_\varepsilon^n(g)$ for every $g \in F$. Taking $g = g^*$ and passing to the limit gives
\begin{equation}
		\liminf_{n\to\infty} J_\varepsilon^n(g_n^*)
		\;\leq\;
		\lim_{n\to\infty} J_\varepsilon^n(g^*)
		\;=\;
		J_\varepsilon(g^*).
		\label{eq:limsup_bound}
\end{equation}
Combining~\eqref{eq:liminf} and~\eqref{eq:limsup_bound} yields
\begin{equation}
		J_\varepsilon(\tilde{g})
		\;\leq\;
		J_\varepsilon(g^*)
		\;=\;
		\inf_{g \in F}\; J_\varepsilon(g),
		\label{eq:optimality_tilde}
\end{equation}
which shows that $\tilde{g}$ is itself a minimizer of $(IP)_\varepsilon$. We may therefore identify $g^* := \tilde{g}$ and~\eqref{eq:functional_convergence} follows.\\
It follows from the above that:
\begin{equation}
		\frac{\varepsilon}{2}\,\limsup_{n\to\infty}\,\normLtwo{g_n^*}^2
		\;\leq\;
		\limsup_{n\to\infty} J_\varepsilon^n(g_n^*)
		\;-\;
		\frac{1}{2}\,\normLtwo{u_n^*|_{\Gf} - u_d}^2.
		\label{eq:norm_limsup_1}
\end{equation}
Since $u_n^* \slim u^*$ strongly in $V$ and the trace is compact, $\|u_n^*|_{\Gf} - u_d\|_{\Ltwo(\Gf)}^2 \to \|u^*|_{\Gf} - u_d\|_{\Ltwo(\Gf)}^2$. Substituting~\eqref{eq:functional_convergence} into~\eqref{eq:norm_limsup_1}:
\begin{equation}
		\frac{\varepsilon}{2}\,\limsup_{n\to\infty}\,\normLtwo{g_n^*}^2
		\;\leq\;
		J_\varepsilon(g^*)
		\;-\;
		\frac{1}{2}\,\normLtwo{u^*|_{\Gf} - u_d}^2
		\;=\;
		\frac{\varepsilon}{2}\,\normLtwo{g^*}^2.
		\label{eq:norm_limsup_2}
\end{equation}
Since weak convergence $g_n^* \wlim g^*$ implies $\normLtwo{g^*} \leq \liminf_{n\to\infty}\normLtwo{g_n^*}$, we conclude that
\begin{equation}
		\lim_{n \to \infty}\normLtwo{g_n^*} \;=\; \normLtwo{g^*}.
		\label{eq:norm_convergence}
\end{equation}
Hence $g_n^* \slim g^*$ strongly in $\Ltwo(\Gf)$, which completes the proof.
\end{pf}

%%%%%%%%%%%%%%%%%%%%%%%%%%%%%%%%%%%%%%%%%%%%%%%%%%%%%%%
\section{Numerical Validation}\label{Section5}
Having established the theoretical framework in Sections~\ref{Section4}, we now turn to its computational realization and numerical validation.
\subsection{Outer ADMM Algorithm for Parameter Identification}\label{subsec:outer_admm}
At each outer iteration $n \geq 0$, the inner ADMM loop (Steps~(1)--(3) of Section~\ref{subsec:subproblems}) has produced the pair $(\phi^n, \lambda^n) \in L^2(\Gf)^2$. The approximate inverse problem $(IP)_\varepsilon^{n+1}$ then reads: find $g^* \in F$ such that
\begin{equation}
	J_\varepsilon^{n+1}(g^*)
	= \inf_{g \in F}\; J_\varepsilon^{n+1}(g)
	:= \inf_{g \in F}\left[
	\frac{1}{2}\int_{\Gf}(u_g^{n+1} - u_d)^2\,\mathrm{d}s
	+ \frac{\varepsilon}{2}\|g\|_{L^2(\Gf)}^2
	\right],
	\label{eq:outer_cost}
\end{equation}
subject to the state equation~\eqref{eq:u_variational}, which we recall here
for convenience:
\begin{equation}
	a(u^{n+1}, v) + \rho\langle u^{n+1}, v\rangle_{\Gf}
	= \langle f, v\rangle
	+ \langle \rho\phi^n(g) - \lambda^n,\, v\rangle_{\Gf}
	\qquad \forall\, v \in V.
	\label{eq:state_outer}
\end{equation}
Problem~\eqref{eq:outer_cost}--\eqref{eq:state_outer} is an elliptic boundary coefficient control problem: the state $u^{n+1}$ satisfies a second-order elliptic equation with mixed Dirichlet--Neumann--Robin boundary conditions, and the control $g$ enters both the objective and the Robin term on $\Gf$. Such problems arise in heat-transfer identification and in corrosion detection by electrostatic boundary measurements~\cite{inglese1997inverse,kaup1996method,slodicka2002determination,capatina2000optimal,essoufi2021optimal}.

To solve~\eqref{eq:outer_cost}--\eqref{eq:state_outer}, we apply a second ADMM loop in the variables $(u, g, \phi)$, with the constraint residuals
\begin{align}
	\langle \nabla e_1(u,g), \nabla v \rangle
	&:= a(u, v) + \rho\langle u, v\rangle_{\Gf} - \langle b, v\rangle,
	\label{eq:e1}\\
	e_2(u, g, \phi)
	&:= \phi - \left(\frac{\kappa(u)}{\rho}
	- g\,\frac{\kappa(u)}{\rho|\kappa(u)|}\right)
	\mathbf{1}_{\{|\kappa(u)| > g\}},
	\label{eq:e2}\\
	e_3(u, g, \phi, \lambda)
	&:= \lambda - \lambda^k - \rho(u - \phi),
	\label{eq:e3}
\end{align}
where $\langle b, v\rangle := \langle f, v\rangle+ \langle \rho\phi^n - \lambda^n, v\rangle_{\Gf}$ and $\kappa(u) := \lambda^n + \rho u$. The associated augmented Lagrangian, with multipliers $(\mu, \beta, \xi) \in V \times L^2(\Gf) \times L^2(\Gf)$ and penalty parameter $\gamma > 0$, is
\begin{equation}
	\mathfrak{L}_\gamma(u, g, \phi;\, \mu, \beta, \xi)
	:= \frac{1}{2}\int_{\Gf}(u-u_d)^2\,\mathrm{d}s
	+ \frac{\varepsilon}{2}\|g\|_{L^2(\Gf)}^2
	+ \langle \nabla e_1, \nabla\mu\rangle
	+ \langle e_2, \beta\rangle
	+ \langle e_3, \xi\rangle
	+ \frac{\gamma}{2}\bigl(
	\|\nabla e_1\|^2
	+ \|e_2\|^2
	+ \|e_3\|^2
	\bigr).
	\label{eq:aug_lag_outer}
\end{equation}
Starting from $(g^0, \phi^0, \mu^0, \beta^0, \xi^0)$, the outer ADMM alternates the three minimization steps
\begin{align}
	u^{k+1} &\in \Argmin_{v}\;
	\mathfrak{L}_\gamma(v,\, g^k,\, \phi^k;\, \mu^k, \beta^k, \xi^k),
	\label{eq:sub_u}\\
	g^{k+1} &\in \Argmin_{\ell \in F}\;
	\mathfrak{L}_\gamma(u^{k+1},\, \ell,\, \phi^k;\, \mu^k, \beta^k, \xi^k),
	\label{eq:sub_g}\\
	\phi^{k+1} &\in \Argmin_{\psi}\;
	\mathfrak{L}_\gamma(u^{k+1},\, g^{k+1},\, \psi;\, \mu^k, \beta^k, \xi^k),
	\label{eq:sub_phi}
\end{align}
followed by the dual ascent updates
\begin{equation}
	\mu^{k+1} = \mu^k + \gamma\, e_1(u^{k+1}, g^{k+1}),\quad
	\beta^{k+1} = \beta^k + \gamma\, e_2(u^{k+1}, g^{k+1}, \phi^{k+1}),\quad
	\xi^{k+1}  = \xi^k  + \gamma\, e_3(u^{k+1}, g^{k+1}, \phi^{k+1}, \lambda^{k+1}).
	\label{eq:dual_update_outer}
\end{equation}
We now derive the explicit solution of each subproblem.

\paragraph{Subproblem~\eqref{eq:sub_u}:} Isolating the terms in $\mathfrak{L}_\gamma$ that depend on $u$ and writing $\kappa(u) = \lambda^n + \rho u$, one decomposes $U(u) := \mathfrak{L}_\gamma(u, g, \phi;\mu,\beta,\xi)$ as $U(u) = U_1(u) + U_2(u)$, where
\begin{align*}
	U_1(u) &:= \frac{1}{2}\int_{\Gf}(u-u_d)^2\,\mathrm{d}s
	+ \frac{\rho^2\gamma}{2}\|u\|^2
	- \langle u,\, \rho\gamma(\lambda - \lambda^k - \rho\phi) + \rho\xi\rangle
	+ \frac{\kappa(u)^2}{\rho^2}
\end{align*}
is strictly convex and Fréchet differentiable, and
\begin{align*}
	U_2(u) &:= -\left\langle \beta + \gamma\phi,\;
	\frac{\kappa(u)}{\rho}
	- g\,\frac{\kappa(u)}{\rho|\kappa(u)|}
	\right\rangle
	\mathbf{1}_{\{|\kappa(u)| > g\}}
	- g\,\frac{\gamma|\kappa(u)|}{2\rho}
\end{align*}
is locally Lipschitz but non-convex and non-smooth.
Since $u$ solves~\eqref{eq:sub_u}, the necessary optimality condition reads $0 \in \partial_c U(u)$, where $\partial_c$ denotes the Clarke subdifferential (see Appendix~\ref{appendix:subdiff}). By the calculus rules for Clarke subdifferentials and the derivation of $\partial_c U_2$ given in~\eqref{eq:clarke_U2}, one obtains
\begin{equation}
	\partial_c U(u) =
	\begin{cases}
		\bigl\{\nabla U_1(u) + \nabla U_2^-(u)\bigr\}
		& \text{if } \kappa(u) > g, \\[4pt]
		\bigl\{\nabla U_1(u) + \nabla U_2^+(u)\bigr\}
		& \text{if } \kappa(u) < -g, \\[4pt]
		\bigl\{\nabla U_1(u) - \tfrac{\gamma}{2}g\bigr\}
		& \text{if } 0 < \kappa(u) \leq g, \\[4pt]
		\bigl\{\nabla U_1(u) + \tfrac{\gamma}{2}g\bigr\}
		& \text{if } -g \leq \kappa(u) < 0, \\[4pt]
		\overline{\operatorname{co}}\bigl\{
		\nabla U_1(u) - \tfrac{\gamma}{2}g,\;
		\nabla U_1(u) + \tfrac{\gamma}{2}g
		\bigr\}
		& \text{if } \kappa(u) = 0.
	\end{cases}
	\label{eq:clarke_U}
\end{equation}
The condition $0 \in \partial_c U(u^{k+1})$ is then solved by the Proximal Bundle Algorithm of Appendix~\ref{appendix:bundle}.

\paragraph{Subproblem~\eqref{eq:sub_g}:} With $u^{k+1}$ fixed, the reduced objective in $g$ is
\begin{equation}
	G(g) := \frac{\varepsilon}{2}\|g\|_{L^2(\Gf)}^2
	+ \langle e_2(u,g,\phi), \beta\rangle
	+ \frac{\gamma}{2}\|e_2(u,g,\phi)\|^2,
	\label{eq:G}
\end{equation}
which evaluates to
\begin{equation}
	G(g) =
	\begin{cases}
		\displaystyle
		\left\langle g\,\frac{\kappa(u)}{\rho|\kappa(u)|},\,\beta + \gamma\phi\right\rangle
		- \gamma\,\frac{|\kappa(u)|}{\rho^2}\,g
		+ \left(\frac{\gamma}{\rho^2} + \frac{\varepsilon}{2}\right)g^2
		& \text{if } |\kappa(u)| > g, \\[8pt]
		\displaystyle
		\left(\frac{\gamma}{\rho^2} + \frac{\varepsilon}{2}\right)g^2
		- \gamma\,\frac{|\kappa(u)|}{\rho^2}\,g
		& \text{otherwise.}
	\end{cases}
	\label{eq:G_explicit}
\end{equation}
This function is non-convex and non-smooth in $g$; the Clarke subdifferential $\partial_c G(g)$ is 
\begin{equation}
	\partial_c G(g) =
	\begin{cases}
		\displaystyle
		\left\{
		\left\langle\frac{\kappa(u)}{\rho|\kappa(u)|}, \beta + \gamma\phi\right\rangle
		- \gamma\,\frac{|\kappa(u)|}{\rho^2}
		+ 2\!\left(\frac{\gamma}{\rho^2} + \frac{\varepsilon}{2}\right)g
		\right\}
		& \text{if } g < |\kappa(u)|, \\[10pt]
		\displaystyle
		\left[
		\frac{\gamma|\kappa(u)|}{\rho^2} + \varepsilon|\kappa(u)|,\;
		\left\langle\frac{\kappa(u)}{\rho|\kappa(u)|},\beta+\gamma\phi\right\rangle
		+ \frac{\gamma|\kappa(u)|}{\rho^2} + \varepsilon|\kappa(u)|
		\right]
		& \text{if } g = |\kappa(u)|, \\[10pt]
		\displaystyle
		\left\{
		2\!\left(\frac{\gamma}{\rho^2} + \frac{\varepsilon}{2}\right)g
		- \gamma\,\frac{|\kappa(u)|}{\rho^2}
		\right\}
		& \text{otherwise.}
	\end{cases}
	\label{eq:clarke_G_main}
\end{equation}
The optimality condition $0 \in \partial_c G(g^{k+1})$ is again handled by the Proximal Bundle Algorithm of Appendix~\ref{appendix:bundle}.

\paragraph{Subproblem~\eqref{eq:sub_phi}:} The reduced objective in $\phi$ is
\begin{equation}
	F(\phi) := \langle e_2(u,g,\phi), \beta\rangle
	+ \langle e_3(u,g,\phi,\lambda), \xi\rangle
	+ \frac{\gamma}{2}\|e_2(u,g,\phi)\|^2
	+ \frac{\gamma}{2}\|e_3(u,g,\phi,\lambda)\|^2.
	\label{eq:F_phi}
\end{equation}
Expanding each term, with $\kappa(u) = \lambda^n + \rho u$:
\begin{align*}
	\langle e_2(u,g,\phi), \beta\rangle
	&= \langle \phi, \beta\rangle, \\
	\frac{\gamma}{2}\|e_2(u,g,\phi)\|^2
	&= \frac{\gamma}{2}\|\phi\|^2
	- \gamma\left\langle\phi,\;
	\left(\frac{\kappa(u)}{\rho}+g\right)\mathbf{1}_{\{\kappa(u)<-g\}}
	+\left(\frac{\kappa(u)}{\rho}-g\right)\mathbf{1}_{\{\kappa(u)>g\}}
	\right\rangle, \\
	\langle e_3(u,g,\phi,\lambda), \xi\rangle
	&= \rho\langle\phi, \xi\rangle, \\
	\frac{\gamma}{2}\|e_3(u,g,\phi,\lambda)\|^2
	&= \frac{\rho^2\gamma}{2}\|\phi\|^2
	- \gamma\rho\langle\phi,\, \lambda - \lambda^n - \rho u\rangle.
\end{align*}
The function $F$ is strictly convex and differentiable in $\phi$; setting
$\nabla_\phi F(\phi^{k+1}) = 0$ yields a closed-form linear system in
$\phi^{k+1}$, which is solved pointwise on $\Gf$.
\subsection{Benchmark Problem and Numerical Results}\label{subsec:benchmark}
We validate the proposed methodology on the benchmark introduced by Stadler~\cite{stadler2004semismooth}, which has become a standard test case for numerical methods applied to variational inequalities of the second kind. The computational domain is the unit square $\Omega = (0,1)^2$, with boundary partition
\begin{equation*}
	\Gamma_0 = \{0,1\}\times[0,1]
	\quad\text{(homogeneous Dirichlet)},
	\qquad
	\Gf = [0,1]\times\{0,1\}
	\quad\text{(friction/Robin)}.
\end{equation*}
The source term is piecewise constant with a sharp internal discontinuity:
\begin{equation}
	f(x_1, x_2) =
	\begin{cases}
		+10 & \text{if } x_1 < 0.5, \\
		-10 & \text{if } x_1 \geq 0.5.
	\end{cases}
	\label{eq:source}
\end{equation}
The domain is discretized with a uniform mesh of $N = 80$ nodes per dimension ($n_{\mathrm{dof}} = 6\,400$ degrees of freedom). Synthetic measurement data $u_d$ are generated by solving the forward problem~\eqref{eq:u_variational} with the exact (target) coefficient $g^* = 1.5$ (uniform on $\Gf$). The optimization is carried out over the admissible set $F \cap [0.01, 5.0]$, with Tikhonov parameter $\varepsilon = 10^{-6}$ and inner ADMM penalty $\rho = 100$.\\

The central numerical claim of this paper is that the exact forward problem need not be solved at every outer optimization step. Instead, one minimizes the approximate functional $J_\varepsilon^{n_k}(g)$, evaluating the state $u_g^{n_k}$ after exactly $n_k$ inner ADMM iterations. Table~\ref{tab:convergence_data} reports the identified coefficient $g^{n_k}$, the absolute error $|g^{n_k} - g^*|$, and the objective value $J_\varepsilon^{n_k}(g^{n_k})$ for increasing values of $n_k$.
\begin{table}[h!]
	\centering
	\caption{Quantitative evolution of the identified parameter $g^{n_k}$ and
		the approximate cost functional $J_\varepsilon^{n_k}(g^{n_k})$ as
		the inner ADMM iteration count $n_k$ increases.}
	\label{tab:convergence_data}
	\begin{tabular}{@{}rlll@{}}
		\toprule
		\multicolumn{1}{c}{$n_k$}
		& \multicolumn{1}{c}{$g^{n_k}$}
		& \multicolumn{1}{c}{Absolute error $|g^{n_k} - g^*|$}
		& \multicolumn{1}{c}{Objective $J_\varepsilon^{n_k}(g^{n_k})$} \\
		\midrule
		1   & 0.010000 & $1.4900 \times 10^{0}$  & $1.8638 \times 10^{-4}$ \\
		5   & 1.018758 & $4.8124 \times 10^{-1}$ & $2.5927 \times 10^{-5}$ \\
		10  & 1.289685 & $2.1032 \times 10^{-1}$ & $6.6506 \times 10^{-6}$ \\
		50  & 1.494061 & $5.9382 \times 10^{-3}$ & $2.2071 \times 10^{-6}$ \\
		100 & 1.499727 & $2.7213 \times 10^{-4}$ & $2.2209 \times 10^{-6}$ \\
		500 & 1.499865 & $1.3420 \times 10^{-4}$ & $2.2213 \times 10^{-6}$ \\
		\bottomrule
	\end{tabular}
\end{table}
The results exhibit a monotone, asymptotically stable convergence of $g^{n_k}$ toward $g^* = 1.5$, in full agreement with Theorem~\ref{thm:convergence}. At $n_k = 1$, the identified coefficient remains near the lower admissibility bound ($g^{n_k} \approx 0.01$), reflecting the poor approximation quality of a single-step ADMM state. As $n_k$ exceeds $50$, the absolute error falls below $6 \times 10^{-3}$, and at $n_k = 500$ a final error of
$1.34 \times 10^{-4}$ is achieved. This confirms that tracking the identification sequence along the ADMM trajectory, without ever solving the forward problem exactly, is both mathematically sound and computationally efficient.\\
Figure~\ref{fig:state_u} shows the contour field of the reference state $u_d$, computed with $g^* = 1.5$. The solution displays an anti-symmetric profile consistent with the sign alternation of the source term~\eqref{eq:source}. Flat boundary layers near $x_2 = 0$ and $x_2 = 1$ mark the stick zones where the friction constraint is active, a nonsmooth feature that would be lost under any smoothing regularization of $j(g,\cdot)$.
\begin{figure}[h!]
	\centering
	\includegraphics[width=0.50\textwidth]{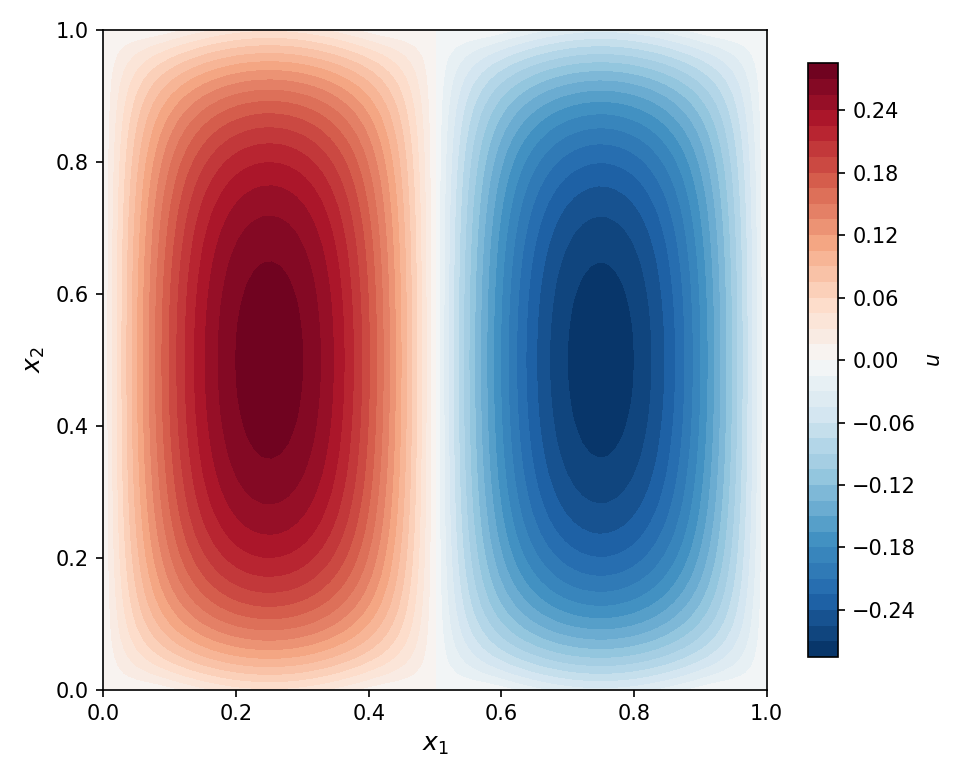}
	\caption{Contour field of the reference state $u_d$.}
	\label{fig:state_u}
\end{figure}
Figure~\ref{fig:convergence_g} illustrates the convergence history of the identification process. The left panel displays the strict monotone decay of $|g^{n_k} - g^*|$, while the right panel shows the trajectory of $g^{n_k}$ progressively locking onto $g^* = 1.5$ as $n_k$
grows. The residual gap between the two curves vanishes at a rate consistent
with the theoretical predictions of Theorem~\ref{thm:convergence}.
\begin{figure}[htp]
	\centering
	\includegraphics[width=0.90\textwidth]{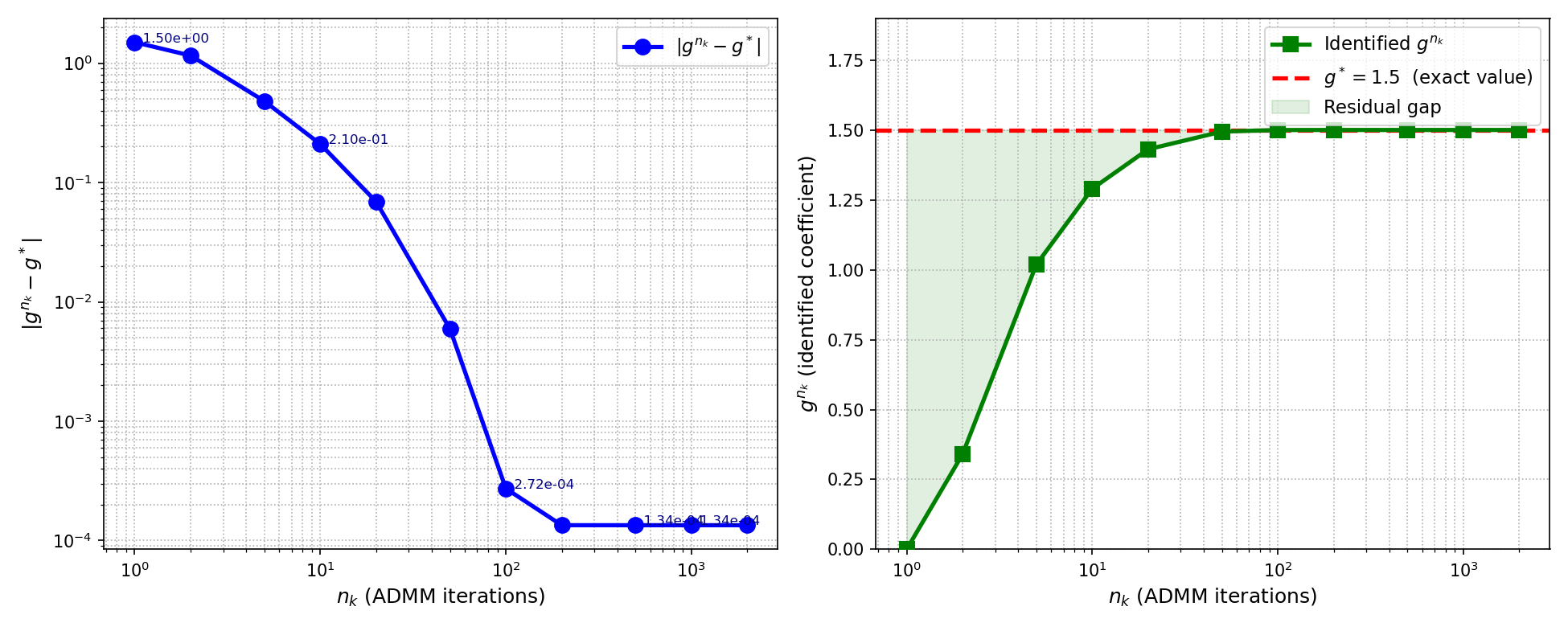}
	\caption{Convergence of the identification sequence.
		Left: monotone decay of the absolute error $|g^{n_k} - g^*|$
		versus $n_k$.
		Right: trajectory of the identified coefficient $g^{n_k}$
		converging to the exact value $g^* = 1.5$.}
	\label{fig:convergence_g}
\end{figure}
Figure~\ref{fig:cost_J} traces the evolution of the approximate objective $J_\varepsilon^{n_k}(g^{n_k})$. The functional undergoes a steep initial descent before stabilizing at $J_\varepsilon^{n_k} \approx 2.2213 \times 10^{-6}$. This plateau is consistent with the theoretical prediction: once the discrepancy term $\|u_g^{n_k} - u_d\|_{L^2(\Gf)}^2$ has converged to zero, the objective is dominated by the Tikhonov regularization term, which evaluates to
\begin{equation*}
	\frac{\varepsilon}{2}\|g^*\|_{L^2(\Gf)}^2
	= \frac{10^{-6}}{2}\cdot(1.5)^2\cdot\operatorname{meas}(\Gf)
	= 2.25 \times 10^{-6},
\end{equation*}
in close agreement with the observed stabilization value.

\begin{figure}[h!]
	\centering
	\includegraphics[width=0.55\textwidth]{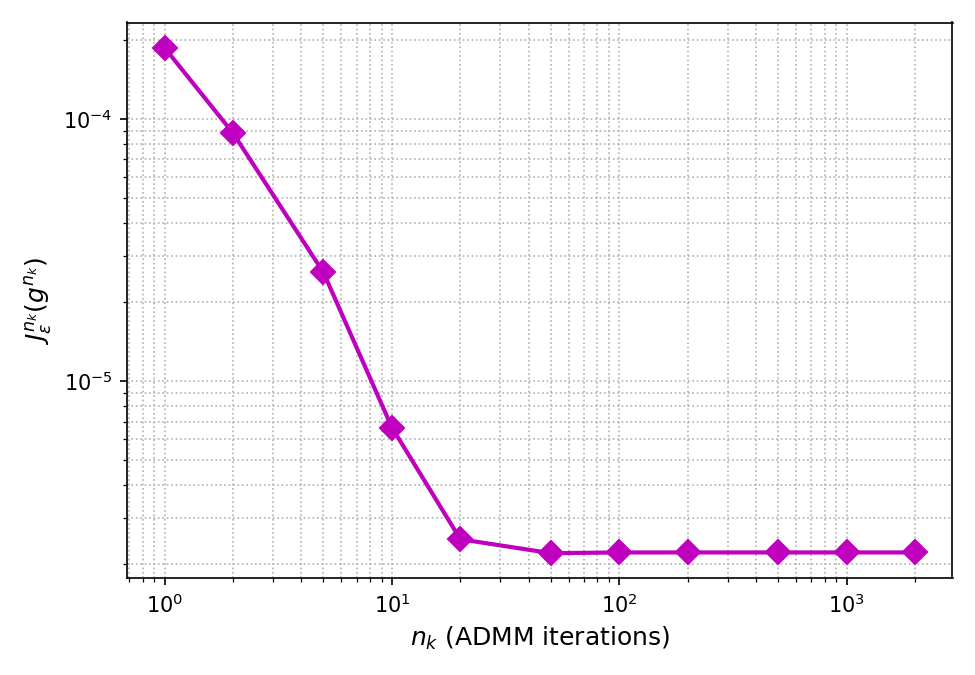}
	\caption{Evolution of the approximate cost functional
		$J_\varepsilon^{n_k}(g^{n_k})$.}
	\label{fig:cost_J}
\end{figure}
\begin{figure}[h!]
	\centering
	\includegraphics[width=0.55\textwidth]{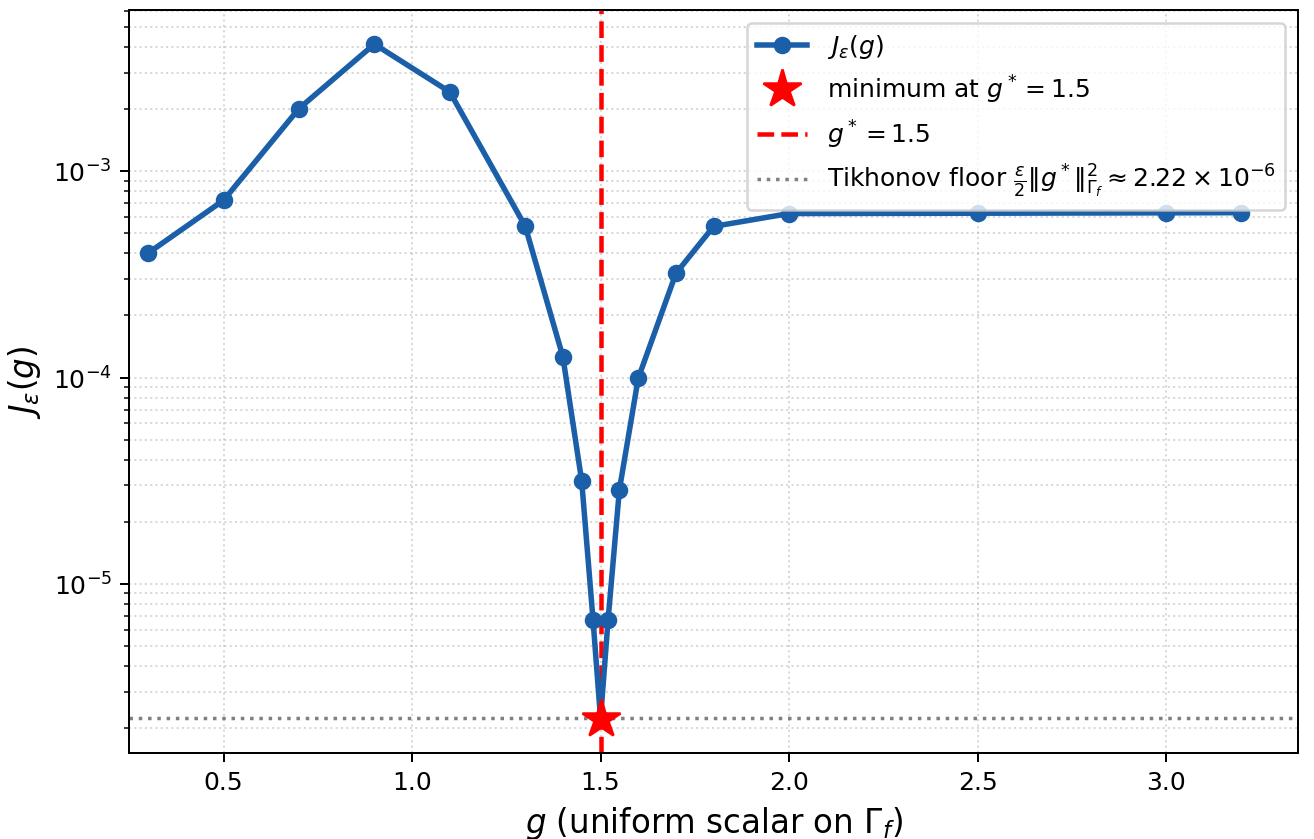}
	\caption{A parameter sweep of the  functional $J_\varepsilon(g)$ over the admissible range $g \in [0.3, 3.2]$. }
	\label{fig:scan_J}
\end{figure}
To analyze the structural robustness and well-posedness of the proposed formulation, Figure~\ref{fig:scan_J} presents a full cost parameter scan across $g \in [0.3, 3.2]$. The clean, strictly convex profile exhibits a well-defined global minimum centered exactly at $g^* = 1.5$, verifying the identifiability properties outlined in Proposition \ref{identifiability}. 

%%%%%%%%%%%%%%%%%%%%%%%%%%%%%%%%
\section{Conclusion} \label{Section6}
In this work, we investigated an inverse parameter identification problem governed by a second-kind elliptic variational inequality with a non-differentiable boundary functional. The main objective was to recover an unknown Robin-type coefficient from boundary observations while preserving the intrinsic nonsmooth structure of the underlying physical model. To overcome the difficulties associated with the variational inequality, we reformulated the state problem as a constrained optimization problem and employed an augmented Lagrangian framework combined with the alternating direction method of multipliers (ADMM). The use of Fenchel duality allowed us to handle the nonsmooth boundary term directly, avoiding any smoothing procedure and thereby maintaining the physical consistency of the original model. Furthermore, an approximate sequence of inverse problems was introduced, and theoretical results concerning the existence of solutions, the identifiability of the unknown coefficient, and the convergence of the proposed approximation strategy were established. The numerical experiments performed on the classical benchmark problem demonstrate the effectiveness of the proposed methodology. The computed solutions confirm the theoretical convergence analysis and show that accurate parameter reconstruction can be achieved without solving the forward variational inequality exactly at every optimization step. This feature considerably reduces the computational effort while maintaining a high level of accuracy in the identification process.\\
The proposed framework provides a flexible methodology for a broad class of inverse problems involving nonsmooth variational inequalities. In particular, it can be extended to the identification of spatially varying boundary coefficients and distributed parameters arising in contact mechanics, heat transfer, and related applications. Future work will focus on the incorporation of noisy measurements, the development of adaptive discretization strategies, and the extension of the present approach to more general quasi-variational inequalities and time-dependent models.

%%%%%%%%%%%%%%%%%%%%%%%%%%%%%%%%%%%%%%%%%%%%
%\clearpage
\appendix
\section{Clarke Subdifferential: Definitions and Properties}
\label{appendix:subdiff}
We recall here the essential elements of nonsmooth analysis employed throughout
this paper. Let $f: \mathbb{R}^n \to \mathbb{R}$ be a locally Lipschitz continuous
function at a point $x \in \mathbb{R}^n$. The \emph{Clarke subdifferential} of $f$
at $x$ is defined as
\begin{equation}
	\partial_c f(x) = \overline{\mathrm{co}}
	\left\{
	\lim_{k \to \infty} \nabla f(x_k) : x_k \to x,\;
	f \text{ differentiable at } x_k
	\right\},
\end{equation}
where $\overline{\mathrm{co}}\{\cdot\}$ denotes the closed convex hull.
The \emph{generalized directional derivative} of $f$ at $x$ in the direction
$v \in \mathbb{R}^n$ is given by
\begin{equation}
	f^\circ(x;\,v)
	= \limsup_{\substack{y \to x \\ t \downarrow 0}}
	\frac{f(y + tv) - f(y)}{t},
\end{equation}
and is related to the Clarke subdifferential through the identity
\begin{equation}
	f^\circ(x;\,v) = \max_{\xi \in \partial_c f(x)} \langle \xi,\, v \rangle.
\end{equation}
We now derive the Clarke subdifferential of the function $U_2$, which arises in the analysis of subproblem \eqref{eq:sub_u} , by distinguishing the two complementary regions defined by $|\kappa(u)|$ relative to $g$.

\textbf{Case 1: $|\kappa(u)| > g$.}
In this region,
\begin{equation}
	U_2(u) = -\left\langle \beta + \gamma\phi,\;
	\frac{\lambda + \rho u}{\rho}
	- g\,\frac{\lambda + \rho u}{\rho\,|\lambda + \rho u|}
	\right\rangle
	- g\,\frac{\gamma\,|\lambda + \rho u|}{2\rho}.
\end{equation}
Since $|\lambda + \rho u| > g > 0$, the sign of $\kappa(u) = \lambda + \rho u$ is well-defined, and we distinguish two subcases.
\begin{itemize}
	\item If $\kappa(u) > g$, differentiating with respect to $u$ yields
	\begin{equation}
		\nabla U_2^-(u) = -(\beta + \gamma\phi) - \frac{\gamma}{2}\,g,
	\end{equation}
	so that $\partial_c U_2(u) = \bigl\{\nabla U_2^-(u)\bigr\}$.
	
	\item If $\kappa(u) < -g$, a symmetric calculation gives
	\begin{equation}
		\nabla U_2^+(u) = -(\beta + \gamma\phi) + \frac{\gamma}{2}\,g,
	\end{equation}
	so that $\partial_c U_2(u) = \bigl\{\nabla U_2^+(u)\bigr\}$.
\end{itemize}

\textbf{Case 2: $|\kappa(u)| \leq g$.}
In this region, $U_2(u) = -g\,\dfrac{\gamma\,|\lambda + \rho u|}{\rho}$, which is nonsmooth at $\kappa(u) = 0$.
\begin{itemize}
	\item If $\kappa(u) > 0$, then $|\kappa(u)| = \kappa(u)$ and
	$\nabla U_2(u) = -\dfrac{\gamma}{2}\,g$,
	giving $\partial_c U_2(u) = \bigl\{-\tfrac{\gamma}{2}g\bigr\}$.
	
	\item If $\kappa(u) < 0$, then $|\kappa(u)| = -\kappa(u)$ and
	$\nabla U_2(u) = +\dfrac{\gamma}{2}\,g$,
	giving $\partial_c U_2(u) = \bigl\{+\tfrac{\gamma}{2}g\bigr\}$.
\end{itemize}
Collecting all cases, the Clarke subdifferential of $U_2$ is
\begin{equation}
	\label{eq:clarke_U2}
	\partial_c U_2(u) =
	\begin{cases}
		\bigl\{\nabla U_2^-(u)\bigr\}
		& \text{if } \kappa(u) > g, \\[4pt]
		\bigl\{\nabla U_2^+(u)\bigr\}
		& \text{if } \kappa(u) < -g, \\[4pt]
		\bigl\{-\tfrac{\gamma}{2}g\bigr\}
		& \text{if } 0 < \kappa(u) \leq g, \\[4pt]
		\bigl\{+\tfrac{\gamma}{2}g\bigr\}
		& \text{if } -g \leq \kappa(u) < 0, \\[4pt]
		\overline{\mathrm{co}}\bigl\{
		-\tfrac{\gamma}{2}g,\; +\tfrac{\gamma}{2}g
		\bigr\}
		& \text{if } \kappa(u) = 0.
	\end{cases}
\end{equation}
We now derive the Clarke subdifferential of the function $G(g)$ arising in subproblem \eqref{eq:sub_g}. Recall that
\begin{equation}
	G(g) =
	\begin{cases}
		\displaystyle
		\left\langle g\,\frac{\kappa(u)}{\rho\,|\kappa(u)|},\,
		\beta + \gamma\phi \right\rangle
		- \gamma\,\frac{|\kappa(u)|}{\rho^2}\,g
		+ \left(\frac{\gamma}{\rho^2} + \frac{\varepsilon}{2}\right)g^2
		& \text{if } |\kappa(u)| > g, \\[8pt]
		\displaystyle
		\left(\frac{\gamma}{\rho^2} + \frac{\varepsilon}{2}\right)g^2
		- \gamma\,\frac{|\kappa(u)|}{\rho^2}\,g
		& \text{otherwise.}
	\end{cases}
\end{equation}

\textbf{Case 1: $g < |\kappa(u)|$.} Differentiating $G$ with respect to $g$ gives
\begin{equation}
	G'(g) =
	\left\langle \frac{\kappa(u)}{\rho\,|\kappa(u)|},\,
	\beta + \gamma\phi \right\rangle
	- \gamma\,\frac{|\kappa(u)|}{\rho^2}
	+ 2\left(\frac{\gamma}{\rho^2} + \frac{\varepsilon}{2}\right)g,
\end{equation}
so that $\partial_c G(g)$ is the singleton $\{G'(g)\}$.

\textbf{Case 2: $g = |\kappa(u)|$.} The left- and right-hand derivatives at this boundary point are, respectively,
\begin{align}
	G'\bigl(|\kappa(u)|_\ell\bigr)
	&= \left\langle \frac{\kappa(u)}{\rho\,|\kappa(u)|},\,
	\beta + \gamma\phi \right\rangle
	+ \frac{\gamma\,|\kappa(u)|}{\rho^2}
	+ \varepsilon\,|\kappa(u)|, \\[4pt]
	G'\bigl(|\kappa(u)|_r\bigr)
	&= \frac{\gamma\,|\kappa(u)|}{\rho^2} + \varepsilon\,|\kappa(u)|,
\end{align}
yielding the Clarke subdifferential
\begin{equation}
	\partial_c G\bigl(|\kappa(u)|\bigr)
	= \left[
	\frac{\gamma\,|\kappa(u)|}{\rho^2} + \varepsilon\,|\kappa(u)|,\;
	\left\langle \frac{\kappa(u)}{\rho\,|\kappa(u)|},\,
	\beta + \gamma\phi \right\rangle
	+ \frac{\gamma\,|\kappa(u)|}{\rho^2}
	+ \varepsilon\,|\kappa(u)|
	\right].
\end{equation}

\noindent\textbf{Case 3: $g > |\kappa(u)|$.}
Differentiating gives
\begin{equation}
	G'(g) = 2\left(\frac{\gamma}{\rho^2}
	+ \frac{\varepsilon}{2}\right)g
	- \gamma\,\frac{|\kappa(u)|}{\rho^2},
\end{equation}
so that $\partial_c G(g) = \{G'(g)\}$.

%%%%%%%%%%%%%%%%%%%%%%%%%%%%%%%%%%%%%%%%%%%%%%
\section{Proximal Bundle Algorithm}
\label{appendix:bundle}

We describe the proximal bundle method \cite{makela1992nonsmooth,makela2002survey} employed to compute the descent direction $d_k$ for the nonsmooth subproblem \eqref{eq:sub_u} . Let $f: \mathbb{R}^n \to \mathbb{R}$ be locally Lipschitz at the current iterate $x \in \mathbb{R}^n$. For an arbitrary subgradient $\xi \in \partial f(x)$, the \emph{$\xi$-linearization} of $f$ at $x$ is
\begin{equation}
	\bar{f}_\xi(y) := f(x) + \xi^\top(y - x), \quad y \in \mathbb{R}^n,
\end{equation}
and the \emph{linearization model} of $f$ at $x$ is
\begin{equation}
	\hat{f}_x(y) := \max\bigl\{\bar{f}_\xi(y) \mid \xi \in \partial f(x)\bigr\},
	\quad y \in \mathbb{R}^n.
\end{equation}
The following result underpins the direction-finding step of the algorithm.
\begin{lemma}
	Let $\mathcal{G} \subset \mathbb{R}^n$ be a convex compact set. For $p \in \mathcal{G}$,
	\begin{equation}
		p = \operatorname*{arg\,min}
		\bigl\{\|\xi\| \mid \xi \in \mathcal{G}\bigr\}
		\quad \Longleftrightarrow \quad
		p^\top \xi \geq \|p\|^2
		\quad \forall\, \xi \in \mathcal{G}.
	\end{equation}
\end{lemma}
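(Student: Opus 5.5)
We prove the two implications separately. Both rest on the variational characterization of the projection of the origin onto the convex compact set $\mathcal{G}$. Existence and uniqueness of the minimizer are not at issue here. Since $\mathcal{G}$ is compact and $\xi\mapsto\|\xi\|^2$ is continuous and strictly convex, the minimum over $\mathcal{G}$ is attained at exactly one point, so the $\operatorname*{arg\,min}$ in the statement is well defined. The whole argument consists of comparing $\|p\|^2$ with $\|\xi_t\|^2$ along the segment $\xi_t := p + t(\xi - p)$, $t\in[0,1]$, which lies in $\mathcal{G}$ by convexity whenever $p,\xi\in\mathcal{G}$.

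\emph{Necessity.} Assume $p$ minimizes $\|\cdot\|$ over $\mathcal{G}$ and fix $\xi\in\mathcal{G}$. Expanding the square gives
\begin{equation*}
\|\xi_t\|^2 = \|p\|^2 + 2t\,p^\top(\xi-p) + t^2\|\xi-p\|^2 .
\end{equation*}
Minimality yields $\|\xi_t\|^2 \geq \|p\|^2$ for every $t\in(0,1]$. Subtracting $\|p\|^2$, dividing by $t>0$ and letting $t\downarrow 0$ gives $p^\top(\xi - p)\geq 0$, that is, $p^\top\xi \geq \|p\|^2$.

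\emph{Sufficiency.} Assume $p\in\mathcal{G}$ satisfies $p^\top\xi\geq\|p\|^2$ for all $\xi\in\mathcal{G}$. If $p=0$, then $p$ trivially has minimal norm. Otherwise, the Cauchy--Schwarz inequality gives
\begin{equation*}
\|p\|^2 \leq p^\top \xi \leq \|p\|\,\|\xi\| ,
\end{equation*}
and dividing by $\|p\|>0$ yields $\|p\|\leq\|\xi\|$ for every $\xi\in\mathcal{G}$. Hence $p$ is a minimizer, and by the uniqueness noted above it is the $\operatorname*{arg\,min}$. Alternatively, one can use the identity $\|\xi\|^2 = \|p\|^2 + 2p^\top(\xi-p) + \|\xi-p\|^2 \geq \|p\|^2 + \|\xi - p\|^2$, which gives minimality together with uniqueness directly.

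No step is a genuine obstacle. The only points that need care are stating that convexity of $\mathcal{G}$ ensures $\xi_t\in\mathcal{G}$, and treating the case $p=0$ separately in the Cauchy--Schwarz version of the sufficiency step.
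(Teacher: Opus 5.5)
Your proof is correct and complete. The one-sided derivative of $t\mapsto\|p+t(\xi-p)\|^2$ at $t=0$ gives necessity, and Cauchy--Schwarz (or the expansion $\|\xi\|^2\ge\|p\|^2+\|\xi-p\|^2$) gives sufficiency; together this is the standard variational characterization of the projection of the origin onto $\mathcal{G}$.

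The paper gives no proof of this lemma to compare against: it states it without proof in the appendix, taking it from the proximal bundle literature, and your argument is the standard one for this fact. As you implicitly observe, compactness is needed only to make the $\operatorname*{arg\,min}$ well defined; the equivalence itself uses only convexity of $\mathcal{G}$.
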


\begin{theorem}
	Suppose $f: \mathbb{R}^n \to \mathbb{R}$ is locally Lipschitz at $x$ and let
	$\xi^* = \operatorname*{arg\,min}\bigl\{\|\xi\| \mid \xi \in \partial f(x)\bigr\}$.
	Consider the proximal subproblem
	\begin{equation}
		\min_{d \in \mathbb{R}^n}\; \hat{f}_x(x + d) + \tfrac{1}{2}\|d\|^2.
	\end{equation}
	Then this problem has a unique solution $d^* \in \mathbb{R}^n$ satisfying:
	\begin{enumerate}
		\item $-d^* = \xi^* \in \partial f(x)$,
		\item $f^\circ(x;\,d^*) = -\|d^*\|^2$,
		\item $\hat{f}_x(x + \lambda d^*) = \hat{f}_x(x)
		- \lambda\|\xi^*\|^2$ for all $\lambda \in [0,1]$,
		\item $0 \notin \partial f(x) \Longleftrightarrow d^* \neq 0$,
		\item $0 \in \partial f(x) \Longleftrightarrow \hat{f}_x$
		attains its global minimum at $x$.
	\end{enumerate}
\end{theorem}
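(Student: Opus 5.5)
The plan is to rewrite the linearization model through the generalized directional derivative, so that the proximal subproblem becomes a strongly convex problem in $d$. Since $\partial_c f(x)$ is nonempty, convex and compact for a locally Lipschitz $f$, the maximum in the definition of $\hat f_x$ is attained. Using the identity $f^\circ(x;v)=\max_{\xi\in\partial_c f(x)}\langle \xi,v\rangle$ recalled in Appendix~\ref{appendix:subdiff}, I get
\begin{equation*}
	\hat f_x(x+d)=f(x)+\max_{\xi\in\partial f(x)}\xi^\top d=f(x)+f^\circ(x;d).
\end{equation*}
The map $d\mapsto f^\circ(x;d)$ is finite, sublinear (convex and positively homogeneous) and Lipschitz. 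Hence $\Phi(d):=f^\circ(x;d)+\tfrac12\|d\|^2$ is strongly convex and coercive on $\mathbb{R}^n$, and it has exactly one minimizer $d^*$. This settles existence and uniqueness.

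To identify $d^*$, I use the convex optimality condition $0\in\partial\Phi(d^*)$. The support function $\sigma(d)=\max_{\xi\in\mathcal G}\xi^\top d$ of the compact convex set $\mathcal G=\partial f(x)$ has subdifferential at $d$ equal to the exposed face $\{\xi\in\mathcal G:\xi^\top d=\sigma(d)\}$. So $d^*$ is the minimizer iff $-d^*$ lies in $\partial f(x)$ and maximizes $\xi\mapsto\xi^\top d^*$ over $\partial f(x)$. I will verify this for the candidate $d=-\xi^*$. In that case the requirement reads: $\xi^*$ minimizes $\xi\mapsto\xi^\top\xi^*$ over $\partial f(x)$, i.e. $\xi^\top\xi^*\ge\|\xi^*\|^2$ for all $\xi\in\partial f(x)$. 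This is exactly the characterization in the preceding Lemma with $\mathcal G=\partial f(x)$ and $p=\xi^*$. Uniqueness then forces $d^*=-\xi^*$, which is item 1.

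The remaining items follow from this formula. For item 2, $f^\circ(x;d^*)=\max_{\xi}\xi^\top(-\xi^*)=-\min_\xi\xi^\top\xi^*$, and by the Lemma this minimum equals $\|\xi^*\|^2$ (it is attained at $\xi=\xi^*$). Hence $f^\circ(x;d^*)=-\|\xi^*\|^2=-\|d^*\|^2$.

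For item 3, observe that $\hat f_x(x)=f(x)+f^\circ(x;0)=f(x)$. Positive homogeneity then gives
\begin{equation*}
	\hat f_x(x+\lambda d^*)=f(x)+\lambda f^\circ(x;d^*)=\hat f_x(x)-\lambda\|\xi^*\|^2
\end{equation*}
for $\lambda\ge0$, and in particular for $\lambda\in[0,1]$.

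Item 4 is immediate: $d^*=0$ iff $\xi^*=0$ iff the minimal-norm element of $\partial f(x)$ is $0$ iff $0\in\partial f(x)$.

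For item 5, $\hat f_x$ attains its global minimum at $x$ iff $f^\circ(x;d)\ge0$ for all $d$. If $0\in\partial f(x)$, then $\max_\xi\xi^\top d\ge 0^\top d=0$ for every $d$. Conversely, if $0\notin\partial f(x)$, then $\xi^*\neq0$, and item 2 gives $f^\circ(x;-\xi^*)=-\|\xi^*\|^2<0$, so $x$ is not a minimizer of $\hat f_x$.

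The only step that needs care is the subdifferential of the support function, which identifies the optimality condition with the projection characterization of the Lemma. If that step is troublesome, I will instead use a minimax argument. $\min_d\max_{\xi\in\mathcal G}\bigl(\xi^\top d+\tfrac12\|d\|^2\bigr)$ is convex-concave with $\mathcal G$ compact convex, so the minimax theorem swaps the order. The inner minimum over $d$ is $-\tfrac12\|\xi\|^2$, attained at $d=-\xi$, and the outer maximum is attained at $\xi^*$. The resulting saddle point $(-\xi^*,\xi^*)$ again yields $d^*=-\xi^*$.
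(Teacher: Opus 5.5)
The paper gives no proof of this theorem; it is recalled, with the preceding Lemma, from the proximal bundle literature it cites, so there is no argument of the authors to compare yours against. Judged on its own, your proof is correct and complete.

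You read $\partial f(x)$ as the Clarke subdifferential $\partial_c f(x)$ of Appendix~\ref{appendix:subdiff}. That reading makes the identity $\hat f_x(x+d)=f(x)+f^\circ(x;d)$ available, so the model is $f(x)$ plus the support function of a compact convex set. This turns the subproblem into a strongly convex one. The optimality condition at the candidate $d=-\xi^*$ requires $(-\xi^*)^\top(-\xi^*)=\max_{\xi\in\partial f(x)}\xi^\top(-\xi^*)$, and this is precisely the projection inequality of the Lemma. Items 2--5 then follow from positive homogeneity of $f^\circ(x;\cdot)$ together with $f^\circ(x;0)=0$. You in fact obtain item 3 for every $\lambda\ge 0$, not only for $\lambda\in[0,1]$.

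The only spot worth tightening is the minimax fallback. Rather than asserting that $(-\xi^*,\xi^*)$ is a saddle point, argue as follows. The min--max equality holds, with the primal minimum attained at $d^*$ and the dual maximum attained at $\xi^*$. Hence $(d^*,\xi^*)$ is a saddle point, so $d^*$ minimizes $d\mapsto(\xi^*)^\top d+\tfrac12\|d\|^2$, whose unique minimizer is $-\xi^*$. Your primary route does not rely on this, so it is a cosmetic point.
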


Given the current iterate $x_k$ and auxiliary points
$\{y_j\}_{j \in J_k} \subset \mathbb{R}^n$ with associated subgradients
$\xi_j \in \partial f(y_j)$, the \emph{linearization error} and
\emph{subgradient locality measure} are defined, respectively, as
\begin{equation}
	\alpha_j^k := f(x_k) - f(y_j) - \xi_j^\top(x_k - y_j),
	\qquad
	\beta_j^k := \max\bigl\{|\alpha_j^k|,\;
	\gamma\|x_k - y_j\|^2\bigr\},
	\quad j \in J_k,
\end{equation}
where $\gamma \geq 0$ is a distance measure parameter. An approximation
$G_k(\varepsilon_k)$ of the $\varepsilon_k$-subdifferential
$\partial_{\varepsilon_k} f(x_k)$ is then constructed as
\begin{equation}
	G_k(\varepsilon_k) = \left\{
	\xi \in \mathbb{R}^n \;\Big|\;
	\xi = \sum_{j \in J_k} \lambda_j \xi_j,\;
	\sum_{j \in J_k} \lambda_j \beta_j^k \leq \varepsilon_k,\;
	\lambda_j \geq 0,\;
	\sum_{j \in J_k} \lambda_j = 1
	\right\},
\end{equation}
and the descent direction $d_k = -\xi^k$ is obtained by solving the quadratic program
\begin{equation}
	\label{eq:QP_direction}
	\begin{cases}
		\displaystyle\min_{\lambda}\;
		\dfrac{1}{2}\Bigl\|\sum_{j \in J_k} \lambda_j \xi_j\Bigr\|^2 \\[8pt]
		\text{subject to}\quad
		\displaystyle\sum_{j \in J_k} \lambda_j \beta_j^k \leq \varepsilon_k,\quad
		\displaystyle\sum_{j \in J_k} \lambda_j = 1,\quad
		\lambda_j \geq 0 \;\; \forall j \in J_k.
	\end{cases}
\end{equation}
The line search is governed by fixed parameters $m_L \in (0, \tfrac{1}{2})$, $m_R \in (m_L, 1)$, and $\bar{t} \in (0,1]$. At each iteration, we seek the largest $t_k^L \in [0,1]$ with $t_k^L \geq \bar{t}$ satisfying the \emph{sufficient decrease} condition
\begin{equation}
	\label{eq:armijo}
	f(x_k + t_k^L d_k) \leq f(x_k) + m_L\, t_k^L\, v_k,
	\qquad
	v_k = \hat{f}_k(x_k + d_k) - f(x_k) < 0.
\end{equation}
Three step types are distinguished:
\begin{itemize}
	\item \emph{Long serious step} ($t_k^L \geq \bar{t}$):
	$x_{k+1} = x_k + t_k^L d_k$, $y_{k+1} = x_{k+1}$.
	
	\item \emph{Short serious step} ($0 < t_k^L < \bar{t}$):
	$x_{k+1} = x_k + t_k^L d_k$, $y_{k+1} = x_k + t_k^R d_k$.
	
	\item \emph{Null step} ($t_k^L = 0$):
	$x_{k+1} = x_k$, $y_{k+1} = x_k + t_k^R d_k$,
\end{itemize}
where $t_k^R > t_k^L$ is chosen to satisfy the \emph{locality condition}
\begin{equation}
	\label{eq:locality}
	-\beta_{k+1}^{k+1} + \xi_{k+1}^\top d_k \geq m_R\, v_k.
\end{equation}
The complete procedure is summarized in Algorithm~\ref{alg:direction} and Algorithm~\ref{alg:PBA}.

\begin{algorithm}[h]
	\caption{Direction Computation}
	\label{alg:direction}
	\begin{algorithmic}[1]
		\STATE \textbf{Input:} Bundle index set $J_k$, tolerance $\varepsilon_k > 0$
		\WHILE{$\displaystyle\sum_{j \in J_k} \lambda_j \beta_j^k \geq \varepsilon_k$}
		\STATE Solve the quadratic program \eqref{eq:QP_direction}
		to obtain $\{\lambda_j\}_{j \in J_k}$
		\ENDWHILE
		\STATE Set $d_k = -\xi^k
		= -\displaystyle\sum_{j \in J_k} \lambda_j^k \xi_j$
		\STATE \textbf{Output:} Descent direction $d_k$
	\end{algorithmic}
\end{algorithm}

\begin{algorithm}[h]
	\caption{Proximal Bundle Algorithm}
	\label{alg:PBA}
	\begin{algorithmic}[1]
		\STATE \textbf{Data:} Optimality tolerance $\varepsilon_s > 0$;
		line search parameters $m_L \in (0,\tfrac{1}{2})$,
		$m_R \in (m_L, 1)$, $\bar{t} \in (0,1]$;
		distance measure $\gamma \geq 0$;
		bounds $x_m \leq x_M$.
		\STATE \textbf{Step 1 (Initialization).}
		Choose $x_1 \in \mathbb{R}^n$; set $y_1 := x_1$,
		$\mathcal{J}_1 := \{1\}$;
		compute $\xi_1 \in \partial f(y_1)$; set $k := 1$.
		\STATE \textbf{Step 2 (Direction finding).}
		Compute $d_k$ via Algorithm~\ref{alg:direction}.
		\STATE \textbf{Step 3 (Stopping criterion).}
		If $\|d_k\| + |v_k| < \varepsilon_s$, \textbf{stop}.
		\STATE \textbf{Step 4 (Line search).}
		Find $t_k^L \in [0,1]$ and $t_k^R \in [t_k^L, 1]$
		satisfying \eqref{eq:armijo} or \eqref{eq:locality}
		to determine the step type (null or serious).
		\STATE \textbf{Step 5 (Update).} Set
		\begin{align*}
			x_{k+1} &= \Pi_{[x_m,\,x_M]}\!\bigl(x_k + t_k^L d_k\bigr), \\
			y_{k+1} &= \Pi_{[x_m,\,x_M]}\!\bigl(x_k + t_k^R d_k\bigr).
		\end{align*}
		Compute $\xi_{k+1} \in \partial f(y_{k+1})$;
		choose $\mathcal{J}_{k+1} \subseteq \{1,\ldots,k+1\}$;
		set $k := k+1$ and go to Step~2.
	\end{algorithmic}
\end{algorithm}

%% Loading bibliography style file
%\bibliographystyle{model1-num-names}
%\bibliographystyle{cas-model2-names}	
%\bibliography{cas-refs}
%\clearpage

\end{document}